\theoremstyle{plain}
\newtheorem{theorem}{Theorem}[section]
\newtheorem{lemma}[theorem]{Lemma}
\newtheorem{corollary}[theorem]{Corollary}
\newtheorem{claim}[theorem]{Claim}
\newtheorem{proposition}[theorem]{Proposition}
\theoremstyle{definition}
\begin{document}

\begin{frontmatter}[classification=text]
%% EDITOR: this will force the keywords to appear right after the Abstract.
%%   If the abstract is too long and would force the keywords off the
%%   front page, please comment out % [classification=text] above
%%   This way the keywords will be floated on the bottom of the first page
%%   even though the Abstract spills over to the next page.

%%% AUTHOR: Title goes here.  This line is optional.  You must use it
%%   if title has footnote attached or requires nontrivial typesetting,
%%   e.g., inclusion of linebreaks to force nice layout.
\title{On automorphism groups of Toeplitz subshifts \thanks{This research was supported through the cooperation project MathAmSud DCS 38889 TM.}} %% please capitalize all significant words

%%% AUTHOR:
%%% List all authors. If you wish, place grant acknowledgements in \thanks.
%%% In brackets include a short tag for each author.

\author[Sebastian]{Sebastian Donoso\thanks{ Partially supported by ERC grant 306494 and CMM-Basal grant PFB-03.}}
\author[Fabien]{Fabien Durand}%\thanks{Supported by...}}
\author[Alejandro]{Alejandro Maass\thanks{Supported by CMM-Basal grant PFB-03.}}
\author[Samuel]{Samuel Petite}%\thanks{Supported by...}}

%%% AUTHOR: Abstract goes here
\begin{abstract}
In this article we study automorphisms of Toeplitz subshifts. 
Such groups are abelian and any finitely generated torsion subgroup is finite and cyclic. 
When the complexity is non-superlinear, we prove that the automorphism group is, modulo a finite cyclic group,  generated by a unique root of the shift.
In the subquadratic complexity case, we show that the automorphism group modulo the torsion is generated by the roots of the shift map and that the result of the non-superlinear case is optimal. Namely,  for any $\varepsilon > 0$ we construct examples of minimal Toeplitz subshifts with complexity bounded by $C n^{1+\epsilon}$ whose automorphism groups are not finitely generated. Finally, we observe the coalescence and the automorphism group give no restriction on the complexity since we provide a family of coalescent Toeplitz subshifts with positive entropy such that their automorphism groups are arbitrary finitely generated infinite abelian groups with cyclic torsion subgroup (eventually restricted to powers of the shift). 		
\end{abstract}
\end{frontmatter}

%%% AUTHOR: body of paper starts here

\section{Introduction}
Given a finite set $\mathcal{A}$, the {\em full shift } ${\mathcal A}^{{\mathbb Z}}$ is the set of all bi-infinite sequences $(x_i)_{i\in \mathbb{Z}}$ with $x_i  \in \mathcal{A}$ for all $i\in \mathbb{Z}$. This set can be thought of as all {\em colorings} of $\mathbb{Z}$ with colors in $\mathcal{A}$.  The finite set $\mathcal{A}$ is usually called the {\em alphabet}.    A \emph{subshift} is a subset of a fullshift ${\mathcal A}^{{\mathbb Z}}$ which is closed for the product topology and invariant under the {\em shift map} $\sigma\colon \mathcal{A}^{\mathbb{Z}}\to \mathcal{A}^{\mathbb{Z}}$, $(x_i)_{i\in \mathbb{Z}}\mapsto (x_{i+1})_{i\in \mathbb{Z}}$.  A {\em word}  $w$ is an element of  $\cup_{n\in \mathbb{N}} \mathcal{A}^n$. Its length $|w|$ is the positive integer such that $w\in \mathcal{A}^{|w|}$.   We say that a word $w$  {\em appears } in a sequence $x=(x_i)_{i \in \mathbb{Z}}\in X$ if there exists $n\in \mathbb{Z}$ such that $w=x_n\cdots x_{n+|w|-1}$. The \emph{complexity} of a subshift $X$ is the map $p_X(\cdot):\mathbb{N}\to \mathbb{N}$ which for $n\in \mathbb{N}$ counts the number of  words of length $n$ appearing in sequences of $X$. An {\em endomorphism} is a continuous onto  map $\phi \colon X \to X$ such that $\phi\circ \sigma=\sigma\circ \phi$. It is called an {\em automorphism} whenever it is bijective. We remark that since $X$ is compact, the inverse of an automorphism $\phi$ is also continuous and thus is an automorphism too.   The group of all automorphisms is countable and denoted ${\rm Aut}(X,\sigma)$.

The study of the group of automorphisms of low complexity subshifts has become very active in the last five years. In contrast with the positive entropy studies, where the automorphism group can be very large (see for instance \cite{BoyleLindRudolph}), a lot of evidence suggests that low complexity systems ought to have a small automorphism group. In particular, studies \cite{SaloTorma} and \cite{Olli} on classes of minimal substitutive subshifts showed that the automorphism groups are virtually $\mathbb{Z}$. It turns out that this result holds for any minimal subshift with non-superlinear complexity (\emph{i.e.}, $\displaystyle\liminf_{n \to \infty} p_X(n)/n<\infty$)  \cite{CyrKra2,DonosoDurandMaassPetite}. Also, for some special substitution subshifts with this complexity growth, the automorphism group is cyclic \cite{CovenQuasYassawi}. 
Higher order polynomial complexity growth was also considered by Cyr and Kra in \cite{CyrKra,CyrKra3}. 
In \cite{CyrKra}, the authors proved that for transitive subshifts, if 
$\displaystyle\liminf_{n \to \infty} p_X(n)/n^2=0$ then the quotient ${\rm Aut}(X,\sigma)/\langle \sigma \rangle$ is a periodic group, where $\langle \sigma \rangle$ is the group spanned by the shift map. In \cite{CyrKra3}, for a large class of minimal subshifts of subexponential complexity they also proved that the automorphism group is amenable.  
These results showed that the automorphism group seems to gain in constraints when the complexity goes down but this is not always true. Interestingly, in \cite{Salo} the author provided a Toeplitz subshift with complexity $p_X(n)\leq Cn^{1.757}$, whose automorphism group is not finitely generated. 

Even though these results allow us to slowly understand the group of automorphisms of low complexity subshifts, the complete picture is still unclear, even for particular classes of subshifts. The purpose of this article is to use the class of Toeplitz subshifts and study their automorphism groups in order to understand some general questions relating complexity growth and the size of the automorphism group. We observe that since the automorphism group of a Toeplitz subshift is a subgroup of the associated odometer \cite{DonosoDurandMaassPetite}, then it is a countable abelian group. This fact restricts our study to such class of groups. 

We start by considering Toeplitz subshifts of subquadratic complexity. In this case, any endomorphism is bijective, so the coalescent property holds (see Section \ref{Sec:ToeplitzSubquadratic} for an expanded discussion). In Theorem \ref{Thm:RootsToeplitz} we show that the automorphism group is spanned by the roots of the shift map modulo the torsion subgroup. In fact, this result holds each time the quotient ${\rm Aut}(X,\sigma)/\langle \sigma \rangle$ is periodic and the subquadratic case is a consequence of \cite{CyrKra}. 
When this quotient is finite, in particular when the Toeplitz subshift has non-superlinear complexity \cite{DonosoDurandMaassPetite}, we also prove in Theorem \ref{Thm:RootsToeplitz} that the automorphism group modulo a finite cyclic group is spanned by one root of the shift map, thus this quotient is a cyclic group. 
Both results open the door to applications to other Toeplitz subshifts with higher complexities. 
These results follow from the study of subgroups of odometers and allow us to recover Theorem 12 from \cite{CovenQuasYassawi}, where the authors consider a family of Toeplitz subshifts generated by substitutions. The condition on Theorem \ref{Thm:RootsToeplitz} is true for many Toeplitz in the family called $(p,q)$-Toeplitz \cite{CK}, including the 
example in \cite{Salo}. Then, using this class of Toeplitz subshifts we extend the example given by Salo in \cite{Salo}, proving that the condition of non-superlinear complexity of  \cite{CyrKra2,DonosoDurandMaassPetite} cannot be relaxed. More precisely, 
in Theorem \ref{Thm:LowComplexityToeplitz} we prove that for any $\varepsilon>0$ there exists a Toeplitz subshift in the aforementioned family such that the complexity verifies $p_X(n)\leq C n^{1+\varepsilon}$ for all $n \in \mathbb{N}$ and whose automorphism group is not  finitely generated. In fact, this construction allows us to produce Toeplitz subshifts with arbirarily big polynomial complexity whose automorphism group is not  finitely generated. We left open the question whether we can get the same result with an even smaller complexity. For instance, $p_X(n)/n\leq  C \log (n)$ for all $n\in \mathbb{N}$. 

In contrast with previous results, large complexity is not enough to have a large automorphism group. Here we prove that coalescence and the size of the automorphism group impose no restrictions to the complexity function. In Theorem \ref{theo:anycountable} we provide a family of coalescent Toeplitz subshifts with positive entropy such that its automorphism group is an arbitrary   infinite countable and finitely generated abelian group with cyclic torsion subgroup. For the group $\mathbb{Z}$, this result resembles the main result in \cite{BulatekK} where the construction is not explicit. Also, in \cite{DownarowiczKwiatkowskiLacroix:1995}, an explicit example is  given, with an arbitrary entropy but with a specific maximal equicontinuous factor. Our construction is explicit, self-contained and may be generalized to $\mathbb{Z}^d$-Toeplitz arrays or $G$-Toeplitz arrays for any countable amenable residually finite group $G$ (see \cite{CortezPetite:2008}). 

Finally, we remark that not every infinite countable abelian group can be the automorphism group of a Toeplitz subshift: it has to be a subgroup of an odometer \cite{DonosoDurandMaassPetite}, hence residually finite. For instance, the rational  numbers $\mathbb{Q}$ does not satisfy this property (see Section \ref{Sec:BackgroundToeplitz}). But besides this restriction, we do not know if every infinitely generated  countable abelian group (embedded in an odometer) can be realized as the automorphism group of a Toeplitz subshift. 

\section{Background}

\subsection{Topological dynamical systems} 

A {\it topological dynamical system} (or just a system) is a pair $(X,T)$ where $X$ is a compact metric space and $T\colon X \to X$ is a homeomorphism. 
Let $\text{``dist''}$ be a distance in $X$. The orbit of a point $x\in X$ is given by $\text{Orb}_T(x)= \{T^nx ; n\in \mathbb{Z}\}$. A topological dynamical system is {\it minimal} if the orbit of every point is dense in $X$ and is {\it transitive} if at least one orbit is dense in $X$. In a transitive system, points with dense orbits are called {\it transitive points}. 

Let $(X,T)$ be a topological dynamical system. We say that $x,y\in X$ are {\em proximal} if there exists a sequence $(n_i)_{i\in \mathbb{N}}$ in $\mathbb{Z}$ such that $\lim_{i\to \infty} \text{dist}(T^{n_i} x, T^{n_i} y)=0.$ A stronger condition than proximality is asymptoticity. Two points $x, y \in X$ are {\em asymptotic} if $\lim_{n\to \infty} \text{dist}(T^n x, T^n y) =0.$
Nontrivial asymptotic pairs may not exist in an arbitrary topological dynamical system but it is well known that a nonempty aperiodic subshift always admits at least one \cite[Chapter 1]{Aus}. 

A {\it factor map} between the topological dynamical systems $(X,T)$ and $(Y,S)$ is a continuous onto map $\pi\colon X\to Y$ such that $\pi\circ T=S\circ \pi$. We say that $(Y,S)$ is a {\it factor} of $(X,T)$ and that $(X,T)$ is an {\it extension} of $(Y,S)$. We use the notation $\pi\colon (X,T)\to (Y,S)$ to indicate the factor map. If in addition $\pi$ is a bijective map we say that $(X,T)$ and $(Y,S)$ are {\it topologically conjugate}. 

The system $(X,T)$ is a {\em proximal extension} of $(Y,S)$ via the factor map \break $\pi\colon (X,T)\to (Y,S)$ (or the factor map itself is a {\it proximal extension}) if for every $x,x'\in X$ the condition $\pi(x)=\pi(x')$ implies that $x,x'$ are proximal.
For minimal systems, $(X,T)$ is an {\em almost one-to-one extension} of $(Y,S)$ via the factor map $\pi:(X,T)\to (Y,S)$ (or the factor map itself is an {\it almost one-to-one extension}) if there exists $y \in Y$ with a unique preimage for the map $\pi$.
The relation between these two notions is given by the following folklore result.
If the factor map $\pi:(X,T)\to (Y,S)$ between minimal systems  
is an almost one-to-one extension then it is also a proximal extension (see \cite{DonosoDurandMaassPetite} for a proof).  

A topological dynamical system $(X,T)$ is \emph{equicontinuous} if for any $\epsilon>0$ there is $\delta >0$ such that  if $\textrm{dist}(x,y)\leq \delta$ then for any $n\in \mathbb{Z}$ one has $\text{dist}(T^nx,T^ny)\leq \epsilon$. It is well known that any topological system has a \emph{maximal equicontinuous factor}, that is, a factor that is equicontinuous and that is an extension of any other equicontinuous factor of the system (see \cite{Aus}).

An {\it automorphism} of the topological dynamical system $(X,T)$ is a homeomorphism $\phi$ of the space $X$ to itself such that $\phi\circ T=T\circ \phi$.
We denote by Aut$(X,T)$ the group of automorphisms of $(X,T)$. The subgroup of Aut$(X,T)$ generated by $T$ is denoted by $\langle T \rangle$. Analogously one defines an {\it endomorphism} of the topological dynamical system $(X,T)$ as a continuous and onto map $\phi\colon X \to X$ such that $\phi\circ T=T\circ \phi$. The space of endomorphisms of $(X,T)$ is denoted End$(X,T)$. A system $(X,T)$ is {\it coalescent} if 
End$(X,T)$=Aut$(X,T)$. When $(X,\sigma)$ is a subshift, the Curtis-Hedlund-Lyndon theorem asserts that an endomorphism $\phi$ is defined by a {\it local rule}. That is, there exists ${\textbf{ r}}\in \mathbb{N}$ (called a radius of $\phi$) and a {\em block map} $\widehat{\phi}\colon \mathcal{A}^{2{\bf r}+1}\to \mathcal{A}$ such that $\phi(x)_{n}=\widehat{\phi}(x_{n-{\bf r}}\ldots x_{n} \ldots x_{n+{\bf r}})$ for every $n\in \mathbb{Z}$. 

\subsection{Odometers}
Let $(p_n)_{n\geq 1}$ be a sequence of natural numbers such that $p_n$ divides $p_{n+1}$ for all 
$n\geq 1$. Define the quotients associated to this sequence by $q_1=p_1$ and $q_{n+1}=p_{n+1}/p_{n}$ for $n\geq 1$. 
The \emph{odometer} at scale $(p_n)_{n\geq 1}$ is given by
\[  \mathbb{Z}_{(p_{n})} = \{ (x_{n})_{n\ge 1 } \in \prod_{n=1}^\infty \mathbb{Z}_{ p_{n}} ; x_{n+1}=  x_n \mod p_{n} \  \forall n \geq 1\},\]
where $\mathbb{Z}_{p}$ stands for $\mathbb{Z} / p\mathbb{Z}$, notation which is usually devoted to the ring of $p$-adic integers when $p$ is prime.
In this framework this ring is the one associated to the scale $(p^n)_{n\geq 1}$ so will be denoted $\mathbb{Z}_{(p^n)}$. 

The set $\mathbb{Z}_{(p_n)}$ is an inverse limit $\lim\limits_{\leftarrow} \mathbb{Z}_{p_n}$ of the canonical homomorphisms $\mathbb{Z}_{p_{n+1}}\to \mathbb{Z}_{p_n}$.
Clearly $\mathbb{Z}_{(p_n)}$ is an abelian group with the coordinatewise addition and when finite it is cyclic. The odometer $\mathbb{Z}_{(p_n)}$ where $p_{n}=n!$ for all $n\geq 1$ is called the {\em universal odometer}. We notice that the odometers $\mathbb{Z}_{(p_n)}$ and $\mathbb{Z}_{(p_{i_n})}$, where 
$(i_n)_{n\geq 1}$ is strictly increasing, are isomorphic as groups.  
We denote by ${\bf 0}$ and ${\bf 1}$ the elements $(0,0,\ldots)$ and $(1,1,\ldots)$ in any odometer. The natural dynamics on an odometer $\mathbb{Z}_{(p_n)}$ is given by the addition by ${\bf 1}$. It is not difficult to see that it is a minimal equicontinuous topological dynamical system, that we also call odometer and denote by $(\mathbb{Z}_{(p_n)}, +{\bf 1})$. In particular, the subgroup $\langle {\bf 1} \rangle$ generated by ${\bf 1}$ is dense in $\mathbb{Z}_{(p_n)}$. This subgroup is identified with the integers. 

The following simple lemma is a slight generalization of the minimality of the odometer. 
The proof is given for completeness.  

\begin{lemma}\label{lem:generalizedminimality}
	Let $\mathbb{Z}_{(p_n)}$ be an odometer and consider an integer $m\in \mathbb{Z}$ such that 
	$(m,p_n)=1$ for all $n\geq 1$. Then, the dynamics defined by the addition by 
	$m{\bf 1}$ in $\mathbb{Z}_{(p_n)}$ is minimal.
\end{lemma}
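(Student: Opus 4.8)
The plan is to reduce the statement to the known minimality of the standard odometer $(\mathbb{Z}_{(p_n)}, +{\bf 1})$ by producing a topological conjugacy. Since $(m,p_n)=1$ for every $n$, the residue of $m$ is a unit in each ring $\mathbb{Z}_{p_n}$, and this lets me define ``multiplication by $m$'' as a self-map $\mu_m$ of the odometer, namely $\mu_m((x_n)_{n\ge1})=(m x_n \bmod p_n)_{n\ge 1}$. First I would check that $\mu_m$ is well defined, i.e.\ that the sequence $(m x_n)_n$ again satisfies the compatibility relation $m x_{n+1}\equiv m x_n \pmod{p_n}$; this is immediate upon multiplying $x_{n+1}\equiv x_n \pmod{p_n}$ by $m$. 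The map $\mu_m$ is continuous because it acts coordinatewise through continuous maps on the finite discrete factors, and it is additive, hence a continuous endomorphism of the topological group $\mathbb{Z}_{(p_n)}$.

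Next I would argue that $\mu_m$ is a homeomorphism. Because $m$ is invertible modulo each $p_n$, there are integers $u_n$ with $m u_n\equiv 1 \pmod{p_n}$, and I would verify that these can be chosen compatibly, that is $u_{n+1}\equiv u_n \pmod{p_n}$: reducing $m u_{n+1}\equiv 1 \pmod{p_{n+1}}$ modulo $p_n$ and comparing with $m u_n \equiv 1 \pmod{p_n}$ gives $m(u_{n+1}-u_n)\equiv 0 \pmod{p_n}$, and cancelling the unit $m$ yields the compatibility. Thus $u=(u_n)_{n\ge1}\in \mathbb{Z}_{(p_n)}$, and $\mu_u$ is a two-sided continuous inverse of $\mu_m$, so $\mu_m$ is a topological group automorphism. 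This compatibility of the partial inverses $u_n$ (equivalently, the invertibility of $m{\bf 1}$ in the ring $\mathbb{Z}_{(p_n)}$) is the only genuinely substantive point; everything else is a direct translation of the hypothesis $(m,p_n)=1$, so I do not expect a real obstacle beyond this bookkeeping.

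Finally I would observe that $\mu_m$ intertwines the two dynamics: since $\mu_m$ is additive and $\mu_m({\bf 1})=m{\bf 1}$, one has
\[ \mu_m(x+{\bf 1})=\mu_m(x)+m{\bf 1} \qquad \text{for all } x\in \mathbb{Z}_{(p_n)}, \]
so $\mu_m\circ(+{\bf 1})=(+m{\bf 1})\circ\mu_m$. Hence $\mu_m$ is a topological conjugacy between $(\mathbb{Z}_{(p_n)},+{\bf 1})$ and $(\mathbb{Z}_{(p_n)},+m{\bf 1})$, and as minimality is a conjugacy invariant, the minimality of the former forces that of the latter. As an alternative, avoiding the ring structure, I could instead show directly that $\overline{\langle m{\bf 1}\rangle}=\mathbb{Z}_{(p_n)}$: the projection of $\langle m{\bf 1}\rangle$ to each finite quotient $\mathbb{Z}_{p_n}$ is surjective because $m$ is invertible mod $p_n$, whence the cyclic subgroup is dense; minimality of the rotation by $m{\bf 1}$ then follows from the homogeneity of the compact group $\mathbb{Z}_{(p_n)}$, every orbit closure being a translate of $\overline{\langle m{\bf 1}\rangle}$.
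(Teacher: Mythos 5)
Your proof is correct, and your main argument takes a genuinely different route from the paper's. The paper builds no conjugacy at all: it observes that, by Bezout, for each $n$ there are integers $a,b$ with $am = bp_n+1$, so the $+m{\bf 1}$-orbit of ${\bf 0}$ (indeed of any point $x$) accumulates on ${\bf 1}$ (on $x+{\bf 1}$); hence every nonempty closed $+m{\bf 1}$-invariant set is $+{\bf 1}$-invariant, and the known minimality of $+{\bf 1}$ finishes the argument in a few lines. Your route instead exploits the ring structure of $\mathbb{Z}_{(p_n)} = \lim_{\leftarrow}\mathbb{Z}_{p_n}$: the hypothesis $(m,p_n)=1$ makes $m$ a unit in each finite factor, your compatibility check (the one substantive point, which you handle correctly by cancelling the unit $m$ mod $p_n$) shows the local inverses $u_n$ assemble into an element $u$ of the odometer, and $\mu_m$ is then a topological group automorphism intertwining $+{\bf 1}$ with $+m{\bf 1}$. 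This costs more bookkeeping (well-definedness, continuity, invertibility of $\mu_m$), but it buys a strictly stronger conclusion: $(\mathbb{Z}_{(p_n)},+m{\bf 1})$ is not merely minimal but topologically conjugate to the standard odometer, so it automatically inherits every conjugacy-invariant property (unique ergodicity, equicontinuity, and so on). Your alternative argument at the end --- density of $\langle m{\bf 1}\rangle$ from surjectivity of its projection onto each finite quotient $\mathbb{Z}_{p_n}$, together with homogeneity of the compact group --- is essentially the paper's proof in different clothing: the surjectivity is the same Bezout step, and homogeneity plays the role that minimality of $+{\bf 1}$ plays in the paper.
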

\begin{proof}
	Since the addition by $\bf 1$ is minimal in $\mathbb{Z}_{(p_n)}$, it suffices to show that the orbit of $0$ in $\mathbb{Z}_{p_n}$ by the addition by $m$ contains $ 1$ for all $n\geq 1$. 
	Since $(m,p_n)=1$, there exist integers $a,  b$ such that 
	$a m=b  p_n +1$.
	This ends the proof.
\end{proof}

We will also need to understand when an odometer has torsion elements. For that we need some extra notation. For each prime number $p$, denote by $v_{p}(n)$ the $p$-{\em adic valuation} of the integer $n$, that is, $v_{p}(n) = \max \{k\geq 0 ; p^{k} \textrm{ divides } n\}$. 
Given an odometer $\mathbb{Z}_{(p_n)}$, the sequence $(v_{p}(p_{n}))_{n\geq 1}$ is a non-decreasing sequence and we can define the {\em multiplicity function}, as proposed in \cite{Downarowicz:2005}, by 
$$ {\bf v}((p_{n})) = \left ( \lim_{n\to \infty} v_{p}(p_{n}); p \textrm{ prime}  \right ).$$

For an abelian group $G$, let $T(G)$ denote its torsion subgroup, that is, the subgroup generated by the torsion elements, {\em i.e.}, elements of finite order. 
For an integer $p$, let $T(G)_{p}$ denote the set of elements in $G$ of order a power of $p$. 

\begin{lemma}\label{lem:torsion}
	Let  $\mathbb{Z}_{(p_n)}$  be an odometer. Then its torsion subgroup is 
	$$T({\mathbb{Z}_{(p_n)}}) = \bigoplus_{p} T({\mathbb{Z}_{(p_n)}})_{p},$$
	where the sum is taken over all the prime numbers $p$ such that $\lim_{n\to \infty} v_{p}(p_{n}) $ is positive and finite. Moreover, each group $T({\mathbb{Z}_{(p_n)}})_{p}$ is a finite cyclic group of order  $p^{\lim_{n\to \infty} v_{p}(p_{n})}$.
	In particular, if $\lim_{n \to \infty} v_{p}(p_{n}) \in \{0,  \infty\}$ for all prime numbers, then $\mathbb{Z}_{(p_n)}$ is torsion free.  
\end{lemma}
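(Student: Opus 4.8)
The plan is to reduce the odometer to a product of its prime-power pieces via the Chinese Remainder Theorem, and then to read off the torsion directly from that decomposition. First I would fix, for each prime $p$, the exponent $\alpha_p = \lim_{n\to\infty} v_p(p_n) \in \{0,1,2,\ldots\}\cup\{\infty\}$, which is well defined since $(v_p(p_n))_{n\ge 1}$ is non-decreasing. For each $n$, applying the Chinese Remainder Theorem to the factorization $p_n = \prod_p p^{v_p(p_n)}$ gives a ring (hence group) isomorphism
\[
  \mathbb{Z}/p_n\mathbb{Z} \;\cong\; \prod_{p} \mathbb{Z}/p^{v_p(p_n)}\mathbb{Z},
\]
and one checks coordinatewise that these isomorphisms are compatible with the bonding maps $\mathbb{Z}/p_{n+1}\mathbb{Z}\to \mathbb{Z}/p_n\mathbb{Z}$ defining the odometer.

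Next I would pass to the inverse limit. Since inverse limits commute with products, this yields
\[
  \mathbb{Z}_{(p_n)} \;\cong\; \prod_{p} \Big(\lim\limits_{\leftarrow}{}_{n}\; \mathbb{Z}/p^{v_p(p_n)}\mathbb{Z}\Big).
\]
For each prime $p$ the inner limit is read off from $\alpha_p$: if $\alpha_p=0$ it is the trivial group; if $0<\alpha_p<\infty$ the sequence $(v_p(p_n))_n$ is eventually constant equal to $\alpha_p$, so the limit is the finite cyclic group $\mathbb{Z}/p^{\alpha_p}\mathbb{Z}$; and if $\alpha_p=\infty$ it is the group of $p$-adic integers $\mathbb{Z}_{(p^n)}$, which is torsion-free. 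Writing $G_p$ for this factor, the odometer becomes $\prod_p G_p$.

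The core of the argument is then to compute the torsion of $\prod_p G_p$. An element $(g_p)_p$ is torsion precisely when a single integer $N\ge 1$ satisfies $N g_p = 0$ for every $p$. The factors with $\alpha_p=\infty$ are torsion-free, so their coordinate must vanish. For a coordinate $g_p\ne 0$ in a factor with $0<\alpha_p<\infty$, the order of $g_p$ is a positive power of $p$, whence $p\mid N$; since $N$ has only finitely many prime divisors, only finitely many coordinates can be nonzero. This shows that the torsion subgroup is the internal direct sum $\bigoplus_p G_p$ taken over the primes with $0<\alpha_p<\infty$, and by the same order analysis $T(\mathbb{Z}_{(p_n)})_p$ is exactly the factor $G_p \cong \mathbb{Z}/p^{\alpha_p}\mathbb{Z}$, a finite cyclic group of order $p^{\alpha_p}=p^{\lim_n v_p(p_n)}$. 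The final assertion is immediate: if every $\alpha_p\in\{0,\infty\}$ then no factor of the second type occurs, the direct sum is trivial, and the odometer is torsion-free.

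The only delicate point I anticipate is the interplay between the infinite product and torsion: one must resist concluding that the torsion subgroup is the full product $\prod_p G_p$ of the finite factors. The observation that saves this is precisely that a fixed exponent $N$ can kill a nonzero element only in the finitely many factors whose prime divides $N$, forcing finitely supported coordinates and hence the direct sum rather than the product. Checking that the Chinese Remainder isomorphisms commute with the bonding maps, so that they assemble into an isomorphism of inverse limits, is routine and reduces to a verification on each coordinate.
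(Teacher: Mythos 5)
Your proof is correct, but it takes a genuinely different route from the paper's. The paper argues element-wise: it invokes the Chinese Remainder Theorem only once, to split the torsion subgroup into its $p$-primary components $T(\mathbb{Z}_{(p_n)})_p$, and then analyzes an arbitrary nonzero element $(x_n)_{n\geq 1}$ of order $p^k$ directly, showing that the existence of such an element forces $\lim_{n\to\infty} v_p(p_n)$ to be positive (since $p^k$ must divide $p_m$ for large $m$) and finite (otherwise one derives a contradiction with $p^k$ being the order of $x_n$), and finally that for $n$ large the elements of $p$-power order in $\mathbb{Z}_{p_n}$ form a cyclic group of order $p^{k_p}$, whence $T(\mathbb{Z}_{(p_n)})_p$ is cyclic of that order. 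You instead apply CRT at every finite level, verify compatibility with the bonding maps, and use the fact that inverse limits commute with products to obtain a global structural decomposition $\mathbb{Z}_{(p_n)} \cong \prod_p G_p$, where each $G_p$ is trivial, cyclic of order $p^{\alpha_p}$, or the ($p$-adic, torsion-free) group $\mathbb{Z}_{(p^n)}$, according to whether $\alpha_p = \lim_n v_p(p_n)$ is zero, finite positive, or infinite; the lemma then falls out of the torsion computation in an infinite product, where you correctly isolate the key subtlety that a single exponent $N$ can only kill coordinates at the finitely many primes dividing $N$, so torsion elements have finite support and the answer is a direct sum, not a product. Your route proves strictly more than the lemma asks, since it identifies the whole odometer up to isomorphism rather than just its torsion subgroup, and this stronger decomposition makes the conclusion transparent; what the paper's approach buys is self-containedness, as it needs no categorical facts about limits commuting with products, only arithmetic of orders in the finite quotients. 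Both arguments are sound.
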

It follows from the Chinese remainder theorem  that the torsion subgroup of a finitely generated subgroup of an odometer is cyclic. Also, the group of $p$-adic integers $\mathbb{Z}_{(p^n)}$ is torsion free. In contrast, the odometer $\mathbb{Z}_{(p_n)}$ where $p_{n}$ is the product of the first $n$ primes, {\em i.e.}, $\lim_{n\to \infty} v_{p}(p_{n})=1$ for any prime number $p$, has a non-finitely generated torsion subgroup. 
\begin{proof}
	The Chinese remainder theorem implies that $T({\mathbb{Z}_{(p_n)}}) = \bigoplus_{p} T({\mathbb{Z}_{(p_n)}})_{p},$ where the sum is taken over all the prime numbers. So it suffices to study each group $T({\mathbb{Z}_{(p_n)}})_{p}$.
	
	Let $p$ be  a prime such that there exists $(x_{n})_{n\geq 1} \in  T({\mathbb{Z}_{(p_n)}})_{p}$ different from ${\bf 0}$, of order $p^k$ for some $k \ge1 $, meaning $p^k x_{n} = 0 \mod p_{n}$ for each $n \ge 1$. 
	Moreover there exists $n$ such that for all large enough $m\geq n$, $p^k$ is the order of $x_m$ in $\mathbb{Z}_{p_m}$, and, 
	thus $p^k$ divides $p_{m}$.
	Thus, if $T({\mathbb{Z}_{(p_n)}})_{p}$ is non trivial then  $\lim_{n\to \infty} v_{p}(p_{n})$ is positive.
	Let us show it is finite. 
	If it is not the case then there would be some $m$ such that $p_m = p_npq$ for some integer $q$ and we should have $p^{k-1}x_m = ap_nq$ for some integer $a$.
	Consequently, $p^{k-1}x_n = 0 \mod p_{n}$ and thus $x_n = 0  \mod p_{n}$.
	This would contradict the fact that $p^k$ is the order of $x_n$.
	
	Hence, $\lim_{n\to \infty} v_{p}(p_{n}) =k_{p}$ is finite and the order of $(x_n)_{n\geq 1}$ is at most $p^{k_{p}}$. Since for each large enough $n$  (so that $v_{p}(p_{n}) =k_{p}$),  the set of elements in $\mathbb{Z}_{p_{n}}$ of order $p^{k'}$ for some $0 \le k' \le k_{p}$,  forms a cyclic group of cardinality $p^{k_{p}}$, the group $T({\mathbb{Z}_{(p_n)}})_{p}$ is a cyclic group of cardinality $p^{k_{p}}$.
\end{proof}

\subsection{Toeplitz subshifts} \label{Sec:BackgroundToeplitz}
We will use classical notions of symbolic dynamics ({\em e.g.} subshift, words, complexity,$\ldots$) and we refer to Section 2.3 in \cite{DonosoDurandMaassPetite} for a presentation  and the notation of these notions.
We assume some familiarity of the reader with the notion of Toeplitz subshift so we review them succinctly. We refer to \cite{Downarowicz:2005} for a survey on this topic.

Let $x=(x_n)_{n\in \mathbb{Z}} \in {\mathcal{A}}^\mathbb{Z}$, where ${\mathcal{A}}$ is a finite alphabet. For an integer $p \geq 1$, we let 
$Per_p(x)= \{ n \in \mathbb{Z}; x_{n}=x_{n+k p}\text{ for all } k\in \mathbb{Z} \}$ be the set of indexes where $x$ is $p$-periodic. The sequence $x$ is said to be \emph{Toeplitz} if there exists a sequence $(p_n)_{n\geq 1}$ in $\mathbb{N}\setminus\{0\}$ such that 
$\mathbb{Z}=\bigcup_{n\geq 1} Per_{p_n}(x)$. Equivalently, the sequence $x$ is Toeplitz if all finite blocks in $x$ appear periodically. We say that $p_n$ is an 
\emph{essential period} if for any $1 \leq p <p_n$ the sets $Per_{p}(x)$ and $Per_{p_n}(x)$ do not coincide. If the sequence $(p_n)_{n\geq 1}$ is formed by essential periods and $p_n$ divides $p_{n+1}$, we call it a \emph{periodic structure} of $x$. Clearly, if $(p_n)_{n\geq 1}$ is a periodic structure, then 
$(p_{i_n})_{n\geq 1}$ is also a periodic structure for any strictly increasing sequence of positive integers $(i_n)_{n\geq 1}$. 

A subshift $(X,\sigma)$ is a \emph{Toeplitz subshift} if $X$ is the orbit closure of a Toeplitz sequence $x \in X$. 
The subshift $(X, \sigma)$ is also referred as the {\em subshift generated} by $x$.
Let  $\textbf{?}$ be a symbol not in $\mathcal{A}$. If $(p_n)_{n\geq 1}$ is a  periodic structure of $x$, then for every $n \geq 1$ we can define the \emph{skeleton map} at scale $p_n$ by $S_{p_n}\colon X\to (\mathcal{A}\cup \text{?})^{\mathbb{Z}}$ by putting  
$(S_{p_n}(y))_{m}$  equal to $y_m$ if $m \in Per_{p_n}(y)$ and to $?$ otherwise. 
Not all the points $y \in X$ are Toeplitz sequences, but they all have the same {\em skeleton structure} $(S_{p_n}(y))_{n\geq 1}$ modulo a shift. More precisely, if $(p_n)_{n\geq 1}$ is the periodic structure of the Toeplitz sequence $x$ and $y$ is any point in $X$, then for any $n\geq 1$ there exists $j_n\in \{0,\ldots,p_n-1\}$ such that $Per_{p_n}(y)=Per_{p_n}(x)-j_n$ and sequences $x$ and $y$ coincide on these coordinates, {\em i.e.}, $S_{p_n}(y)=\sigma^{j_n}S_{p_n}(x)$ (see \cite{Downarowicz:2005}, Section 8).  

It is well known that a minimal subshift $(X,\sigma)$ is a Toeplitz subshift if it is an almost 
one-to-one extension of an odometer. The odometer is given by $\mathbb{Z}_{(p_n)}$, 
where $(p_n)_{n\geq 1}$ is a periodic structure of a Toeplitz point $x$ generating $X$. 
The projection of a point $y \in X$ into the odometer is given by the sequence 
$(j_n)_{n\geq 1}$ described above (see for instance \cite{Williams84}). Hence, the projection of the Toeplitz sequence $x$ is ${\bf 0}$.
Moreover, this odometer is the \emph{maximal equicontinuous factor} of $(X,\sigma)$. 
A finite Toeplitz subshift is generated by a periodic Toeplitz sequence so it can be identified with its also finite associated odometer.

Since a Toeplitz subshift is an almost one-to-one extension of its maximal equicontinuous factor, to study its group of automorphisms, we will use the following result proved in \cite{DonosoDurandMaassPetite} (Lemma 2.1 and Lemma 2.4). 

\begin{lemma}[\cite{DonosoDurandMaassPetite}]\label{lem:AutoExtensiona11} 
	Let $(X,T)$ be a minimal system and $\pi \colon (X,T) \to (Y,S)$ be the projection of $X$ onto its maximal equicontinuous factor. Then, we can define a map  
	$\widehat{\pi}:	{\rm Aut} (X,T)   \to  {\rm Aut} (Y,S)$, 
	$\phi \ \mapsto \ \widehat{\pi}(\phi)$,  
	such that $\widehat{\pi}(\phi)(\pi(x))=\pi(\phi x)$ for every $x\in X$. 
	If $\pi$ is a proximal extension (in particular if $\pi$ is an almost one-to-one extension) 
	then $\widehat{\pi}$ is injective. 
\end{lemma}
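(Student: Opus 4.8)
The plan is to construct $\widehat{\pi}$ from the universal property of the maximal equicontinuous factor, to check that it takes values in automorphisms, and then to obtain injectivity from a minimality argument applied to the graph of an automorphism.

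First I would fix $\phi \in {\rm Aut}(X,T)$ and observe that $\pi\circ\phi\colon (X,T)\to (Y,S)$ is again a factor map: it is continuous and onto (since $\phi(X)=X$ and $\pi$ is onto), and it intertwines $T$ with $S$ because $\pi\circ\phi\circ T=\pi\circ T\circ\phi=S\circ\pi\circ\phi$. Thus $(Y,S)$ is realized as an equicontinuous factor of $(X,T)$ through $\pi\circ\phi$. Since the maximal equicontinuous factor $(Y,S)$ (reached through $\pi$) is an extension of every equicontinuous factor of $(X,T)$, this property provides a factor map $g\colon (Y,S)\to (Y,S)$ with $g\circ\pi=\pi\circ\phi$, and $g$ is unique because $\pi$ is surjective. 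Setting $\widehat{\pi}(\phi)=g$ gives at once the required identity $\widehat{\pi}(\phi)(\pi(x))=\pi(\phi x)$. Uniqueness forces $\widehat{\pi}$ to be a homomorphism, and applying the same construction to $\phi^{-1}$ together with uniqueness yields $\widehat{\pi}(\phi)\circ\widehat{\pi}(\phi^{-1})=\widehat{\pi}(\phi^{-1})\circ\widehat{\pi}(\phi)={\rm id}_Y$, so $\widehat{\pi}(\phi)$ is genuinely an automorphism of $(Y,S)$.

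For injectivity, assume $\pi$ is a proximal extension and $\widehat{\pi}(\phi)=\widehat{\pi}(\psi)$. Then $\pi\circ\phi=\pi\circ\psi$, and writing $\alpha=\phi\circ\psi^{-1}\in{\rm Aut}(X,T)$ one checks (take $x=\psi^{-1}y$) that $\pi(\alpha y)=\pi(y)$ for every $y\in X$. Since $\pi$ is a proximal extension, $y$ and $\alpha y$ are proximal for all $y$. It therefore suffices to prove the key claim: an automorphism $\alpha$ of a minimal system for which $(y,\alpha y)$ is proximal for every $y$ is the identity.

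To prove the claim I would pass to the graph $\Gamma_\alpha=\{(y,\alpha y): y\in X\}\subseteq X\times X$. The map $y\mapsto (y,\alpha y)$ is a topological conjugacy between $(X,T)$ and $(\Gamma_\alpha, T\times T)$, since it is a homeomorphism onto $\Gamma_\alpha$ and intertwines $T$ with $T\times T$ (using $\alpha T=T\alpha$); hence $(\Gamma_\alpha,T\times T)$ is minimal. Fixing any $y$ and using that $(y,\alpha y)$ is proximal, I extract (after passing to a subsequence) indices $n_i$ with ${\rm dist}(T^{n_i}y,T^{n_i}\alpha y)\to 0$ and $T^{n_i}y\to z$, whence $T^{n_i}\alpha y=\alpha T^{n_i}y\to z$ as well, so $(z,z)\in\Gamma_\alpha\cap\Delta$, where $\Delta$ is the diagonal. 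As $\Delta$ is closed and $T\times T$-invariant, $\Gamma_\alpha\cap\Delta$ is a nonempty closed invariant subset of the minimal system $\Gamma_\alpha$, which forces $\Gamma_\alpha\subseteq\Delta$. This means $\alpha y=y$ for all $y$, so $\alpha={\rm id}_X$ and therefore $\phi=\psi$. I expect this last claim to be the main obstacle; the graph-minimality argument is the cleanest route to it, while the construction and well-definedness of $\widehat{\pi}$ are a formal consequence of the universal property of the maximal equicontinuous factor.
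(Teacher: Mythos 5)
Your proof is correct, and a preliminary remark is in order: the paper does not prove this lemma at all, but quotes it from \cite{DonosoDurandMaassPetite} (Lemmas 2.1 and 2.4 there), so the comparison is with that cited argument, which your proposal essentially reconstructs. Two points of comparison. First, for the construction of $\widehat{\pi}$ you invoke the universal property of the maximal equicontinuous factor in its strong, commuting-diagram form: every factor map from $(X,T)$ onto an equicontinuous system factors through $\pi$. This is the right tool, but note that the weak form stated in the paper's background (``$(Y,S)$ is an extension of every equicontinuous factor'') does not literally hand you a $g$ with $g\circ\pi=\pi\circ\phi$; one needs the factorization property, which comes from realizing $Y$ as the quotient of $X$ by the equicontinuous structure relation (equivalently, for minimal $\mathbb{Z}$-systems, the regionally proximal relation). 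An equivalent route, closer to the cited proof, is to observe that $\phi\times\phi$ preserves that relation, so $\phi$ maps fibers of $\pi$ into fibers and hence descends to $Y$. Second, your injectivity argument via the graph of $\alpha=\phi\circ\psi^{-1}$ is sound, but it can be streamlined in a way that also covers endomorphisms (the generality of the cited lemmas, which the paper explicitly remarks on right after the statement): if $\pi\circ\phi=\pi\circ\psi$, then $\phi x$ and $\psi x$ are proximal for every $x$; choosing $n_i$ with ${\rm dist}(T^{n_i}\phi x,T^{n_i}\psi x)={\rm dist}(\phi T^{n_i}x,\psi T^{n_i}x)\to 0$ and passing to a subsequence with $T^{n_i}x\to z$ gives $\phi z=\psi z$ by continuity, and the equalizer $\{z\in X;\ \phi z=\psi z\}$ is nonempty, closed and $T$-invariant, hence equals $X$ by minimality. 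This avoids using invertibility of $\psi$, which your graph argument needs. These are presentational differences only; the mathematics in your proposal is complete.
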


It is worth mentioning that the same result holds for endomorphisms and that endomorphisms of an equicontinuous system are automatically automorphisms \cite{Aus63}. It follows that the automorphism group of a Toeplitz subshift can be identified with a subgroup of the associated odometer. Indeed, it is well known that the group of automorphisms of an equicontinuous system is homeomorphic to the space itself (see \cite{Aus63} or Lemma 5.9 in \cite{DonosoDurandMaassPetite} for a shorter proof). It follows that the automorphism group of a Toeplitz subshift is a countable abelian group.  

Many other results described below will follow from the analysis of subgroups of odometers. As a first remark we have that if $(X, \sigma)$ is a Toeplitz subshift and $\mathbb{Z}_{(p_n)}$ is its associated odometer, then any finitely generated subgroup $G \le  {\rm Aut}(X, \sigma)$ is isomorphic to $\mathbb{Z}^d \oplus H$, where, by Lemma \ref{lem:torsion}, $H$ is a finite cyclic abelian group. If the odometer has no torsion, then the group $H$ is trivial and thus $G$ is isomorphic to $\mathbb{Z}^d$.
This kind of properties restrict the groups that can be realized as the automorphism groups of  Toeplitz subshifts. We already mentioned in the introduction  that the group of rational numbers with the addition cannot be injected in any odometer $\mathbb{Z}_{(p_n)}$. Nevertheless,
we will see in Section \ref{sec:realisation} that any finitely generated abelian group 
whose torsion subgroup is cyclic can be realized as the automorphism group of a Toeplitz subshift.

\subsection{Automorphism group of disjoint Toeplitz subshifts}
Two topological dynamical systems $(X,T)$ and $(Y,S)$ are said to be \emph{disjoint} if the product system $(X\times Y,T\times S)$ does not have any non-empty, closed and $T\times S$ invariant subsets projecting onto $X$ and $Y$ respectively, different from $X\times Y$. 

In what follows we use the symbol $\oplus$ (instead of $\times$) whenever we want to stress that a product is in the group category. We start with a general lemma.
Notice that the inclusion ${\rm End}(X,T)\oplus {\rm End}(Y,S) \subseteq {\rm End}(X\times Y,T\times S)$ is always true.

\begin{lemma} \label{Lemma:AutoProduct}
	Let $(X,T)$ and $(Y,S)$ be disjoint minimal systems. If $ \phi \in{\rm End}(X\times Y,T\times S)$ (resp. ${\rm Aut}(X\times Y,T\times S)$) commutes with  ${\rm id} \times S$ and $T\times {\rm id}$, then $\phi \in {\rm End}(X,T)\oplus {\rm End}(Y,S)$ (resp. $ {\rm Aut}(X,T)\oplus {\rm Aut}(Y,S)$). In particular, the conclusion holds if  ${\rm End}(X\times Y,T\times S)$ is abelian. 
\end{lemma}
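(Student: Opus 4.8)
The plan is to decompose $\phi$ into its two coordinate maps and show that each one depends on only one of the variables, the key tool being minimality of the factors. Write $\phi=(\phi^X,\phi^Y)$, where $\phi^X=\pi_X\circ\phi\colon X\times Y\to X$ and $\phi^Y=\pi_Y\circ\phi\colon X\times Y\to Y$ are continuous and $\pi_X,\pi_Y$ denote the canonical projections.

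First I would use the commutation with $\mathrm{id}\times S$. Comparing first coordinates in $\phi\circ(\mathrm{id}\times S)=(\mathrm{id}\times S)\circ\phi$ yields $\phi^X(x,Sy)=\phi^X(x,y)$ for all $(x,y)$. Fixing $x$, the continuous map $y\mapsto\phi^X(x,y)$ is then constant along the $S$-orbit of every point, hence constant on all of $Y$ by minimality of $(Y,S)$ and density of orbits. Thus $\phi^X(x,y)$ is independent of $y$ and defines a continuous map $\phi_1\colon X\to X$ by $\phi_1(x)=\phi^X(x,y_0)$ for any fixed $y_0$. Symmetrically, comparing second coordinates in the commutation with $T\times\mathrm{id}$ gives $\phi^Y(Tx,y)=\phi^Y(x,y)$, and minimality of $(X,T)$ forces $\phi^Y$ to be independent of $x$, producing a continuous map $\phi_2\colon Y\to Y$. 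Hence $\phi(x,y)=(\phi_1(x),\phi_2(y))$.

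Next I would verify that $\phi_1$ and $\phi_2$ are endomorphisms. Since $\phi$ commutes with $T\times S$ (this follows from the two hypotheses because $(T\times\mathrm{id})\circ(\mathrm{id}\times S)=T\times S$), comparing coordinates in $\phi(Tx,Sy)=(T\times S)\phi(x,y)$ gives $\phi_1\circ T=T\circ\phi_1$ and $\phi_2\circ S=S\circ\phi_2$. Surjectivity is immediate because the image of a product map is the product of the images, $\phi(X\times Y)=\phi_1(X)\times\phi_2(Y)$, so $\phi$ onto forces $\phi_1$ and $\phi_2$ onto. Therefore $\phi_1\in\mathrm{End}(X,T)$ and $\phi_2\in\mathrm{End}(Y,S)$, i.e. $\phi\in\mathrm{End}(X,T)\oplus\mathrm{End}(Y,S)$. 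In the automorphism case a product map is bijective iff each factor is, and a continuous bijection of a compact metric space is a homeomorphism, so $\phi_1,\phi_2$ are automorphisms. For the final assertion, observe that $T\times\mathrm{id}$ and $\mathrm{id}\times S$ lie in $\mathrm{Aut}(X\times Y,T\times S)\subseteq\mathrm{End}(X\times Y,T\times S)$; if the latter is abelian, then every $\phi$ automatically commutes with them and the preceding analysis applies.

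The argument is essentially routine once the projection identities are written out, so I expect no serious obstacle; the single point to get right is that the constancy of each coordinate map in the ``wrong'' variable comes precisely from minimality of the corresponding factor together with continuity. I would note in passing that disjointness, although assumed in the statement, is not actually invoked in this argument: minimality of both factors is what the proof uses.
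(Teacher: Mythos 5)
Your proof is correct and takes essentially the same approach as the paper's: write $\phi=(\phi_1,\phi_2)$, use the commutation with ${\rm id}\times S$ and $T\times{\rm id}$ to see that each coordinate map is invariant under the dynamics in the other variable, and conclude by minimality (plus continuity) that it depends on one coordinate only; the equivariance, surjectivity, and bijectivity checks you spell out are left implicit in the paper. Your closing observation is also accurate and consistent with the paper's proof, which likewise never invokes disjointness---that hypothesis is only used later, in Corollary \ref{Corollary:AutoProductToeplitz}, to guarantee the product is a Toeplitz subshift whose endomorphism monoid is abelian.
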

\begin{proof}
	Write the  endomorphism $\phi(x,y)=(\phi_1(x,y),\phi_2(x,y))$. If $\phi$ commutes with ${\rm id} \times S$ and $T\times {\rm id}$, then we get that $\phi_1(x,y)=\phi_1(x,S^n y)$ and $\phi_2(T^nx,y)=\phi_2(x,y)$ for every $n\in \mathbb{Z}$. By minimality of $(X,T)$ and $(Y,S)$ we get that  $\phi_1$ only depends on $x$ and $\phi_2$ only depends on $y$, meaning that $\phi$ belongs to ${\rm End}(X,T)\oplus {\rm End}(Y,S)$. The same is true for $\phi$ an automorphism.  
\end{proof}

\begin{corollary} \label{Corollary:AutoProductToeplitz}
	Let $(X_1,\sigma)$ and $(X_2,\sigma)$ be disjoint Toeplitz subshifts. Then, ${\rm End}(X_1 \times X_2,\sigma \times \sigma) = {\rm End}(X_1, \sigma)\oplus {\rm End}(X_2,\sigma)$  and ${\rm Aut}(X_1 \times X_2,\sigma \times \sigma) = {\rm Aut}(X_1, \sigma)\oplus {\rm Aut}(X_2,\sigma)$. In particular, if $(X_1,\sigma)$ and $(X_2,\sigma)$ are coalescent then $(X_1\times X_2,\sigma\times \sigma)$ is coalescent too.
\end{corollary}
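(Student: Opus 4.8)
The plan is to reduce both equalities to the abelianness criterion in Lemma~\ref{Lemma:AutoProduct}. Since the inclusion ${\rm End}(X_1,\sigma)\oplus{\rm End}(X_2,\sigma)\subseteq {\rm End}(X_1\times X_2,\sigma\times\sigma)$ is always true, and the analogous inclusion holds for automorphisms, it suffices to prove the reverse containments. By Lemma~\ref{Lemma:AutoProduct} this follows at once, for both ${\rm End}$ and ${\rm Aut}$, provided ${\rm End}(X_1\times X_2,\sigma\times\sigma)$ is abelian: indeed ${\rm id}\times\sigma$ and $\sigma\times{\rm id}$ are themselves endomorphisms of the product, so abelianness forces every endomorphism to commute with them.

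Thus the whole proof rests on showing that ${\rm End}(X_1\times X_2,\sigma\times\sigma)$ is abelian, and I would obtain this by showing that the product is itself a Toeplitz subshift. First, the product is minimal: each $(X_i,\sigma)$ is minimal, so any minimal subset of $X_1\times X_2$ projects onto both factors, and by the very definition of disjointness such a subset must equal $X_1\times X_2$. Second, viewing $X_1\times X_2$ as a subshift over the product alphabet $\mathcal{A}_1\times\mathcal{A}_2$, I would realize it as an almost one-to-one extension of an odometer. Each $(X_i,\sigma)$ is an almost one-to-one extension $\pi_i\colon X_i\to Z_i$ of an odometer $Z_i$; choosing $z_i\in Z_i$ with a single preimage under $\pi_i$, the product map $\pi_1\times\pi_2\colon X_1\times X_2\to Z_1\times Z_2$ has the unique-preimage point $(z_1,z_2)$, while $Z_1\times Z_2$ is again an odometer. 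Hence $X_1\times X_2$ is a minimal subshift that is an almost one-to-one extension of an odometer, that is, a Toeplitz subshift.

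By the discussion following Lemma~\ref{lem:AutoExtensiona11}, the endomorphism group of any Toeplitz subshift embeds into the automorphism group of its associated (equicontinuous) odometer, which is abelian; therefore ${\rm End}(X_1\times X_2,\sigma\times\sigma)$ is abelian and the two product decompositions follow. The coalescence statement is then immediate: if ${\rm End}(X_i,\sigma)={\rm Aut}(X_i,\sigma)$ for $i=1,2$, then ${\rm End}(X_1\times X_2,\sigma\times\sigma)={\rm End}(X_1,\sigma)\oplus{\rm End}(X_2,\sigma)={\rm Aut}(X_1,\sigma)\oplus{\rm Aut}(X_2,\sigma)={\rm Aut}(X_1\times X_2,\sigma\times\sigma)$.

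I expect the main obstacle to be the identification of the product as a Toeplitz subshift, and within that the two points carrying the actual content: that disjointness upgrades the mere existence of a minimal subset to minimality of the whole product, and that the product of the two almost one-to-one factor maps remains almost one-to-one over the product odometer. Everything after abelianness is secured is a mechanical appeal to Lemma~\ref{Lemma:AutoProduct}.
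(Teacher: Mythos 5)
Your proposal follows the same route as the paper's own proof, which is essentially two lines: the product $(X_1\times X_2,\sigma\times\sigma)$ is itself a Toeplitz subshift, hence ${\rm End}(X_1\times X_2,\sigma\times\sigma)$ is abelian by Lemma \ref{lem:AutoExtensiona11} (together with the remark following it, that the embedding into the odometer also works for endomorphisms), and then Lemma \ref{Lemma:AutoProduct} applies. What you add is a justification, which the paper omits, of the claim that the product is Toeplitz; your minimality argument via the definition of disjointness is correct and is exactly where the disjointness hypothesis does its work.

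There is, however, one step you assert that is false as a general principle and needs the hypotheses to be invoked: ``$Z_1\times Z_2$ is again an odometer.'' The product of two odometers with the diagonal action of $(\mathbf{1},\mathbf{1})$ need not be an odometer: for $Z_1=Z_2=\mathbb{Z}_{(2^n)}$ the orbit closure of $(\mathbf{0},\mathbf{0})$ is the diagonal, so the product system is not even minimal. In your situation the claim does hold, but for a reason you must supply --- and you already have the ingredient in hand: since $X_1\times X_2$ is minimal (your first step) and $\pi_1\times\pi_2$ is a factor map onto $Z_1\times Z_2$, the system $Z_1\times Z_2$ is minimal and equicontinuous; its phase space is then a compact totally disconnected monothetic group (the closure of the $(\mathbf{1},\mathbf{1})$-orbit of $(\mathbf{0},\mathbf{0})$, which by minimality is everything), and a minimal rotation on such a group is conjugate to an odometer. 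Equivalently, disjointness of $(X_1,\sigma)$ and $(X_2,\sigma)$ passes to their factors $Z_1=\mathbb{Z}_{(p_n)}$ and $Z_2=\mathbb{Z}_{(q_n)}$, which for odometers forces $(p_n,q_m)=1$ for all $n,m$; the Chinese remainder theorem then identifies $Z_1\times Z_2$ with $\mathbb{Z}_{(p_nq_n)}$, sending $(\mathbf{1},\mathbf{1})$ to $\mathbf{1}$. With this one-line repair, the rest of your argument --- the unique-preimage point $(z_1,z_2)$, abelianness of ${\rm End}$, the appeal to Lemma \ref{Lemma:AutoProduct}, and the coalescence conclusion --- is correct and matches the paper.
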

\begin{proof}
	It suffices to notice that the system $(X_1 \times X_2, \sigma\times \sigma)$ is a Toeplitz subshift itself. By Lemma \ref{lem:AutoExtensiona11}, ${\rm End}(X_1 \times X_2,\sigma \times \sigma)$ is abelian and we can apply Lemma \ref{Lemma:AutoProduct}. 
\end{proof}

\section{Automorphism group of Toeplitz subshifts with subquadratic complexity}
\label{Sec:ToeplitzSubquadratic}

In this section we study the automorphism groups of Toeplitz subshifts of subquadratic complexity. That is, the complexity function verifies $\displaystyle \liminf_{n\to \infty}\frac{p_X(n)}{n^2}=0$. In this case, a simple argument relying in  Lemma 5 of \cite{QZ} implies that these systems are coalescent. In fact, if $(X,\sigma)$ is a minimal subshift of subquadratic complexity (not necessarily Toeplitz), we can consider the {\em spacetime} tiling of an endomorphism $\phi$, as done in \cite{CyrKra, CyrKra4,CyrKra5}, 
and obtain a periodicity condition on this tiling that is translated into $\phi^n=\sigma^m$ for some $n\in \mathbb{N}$ and $m\in \mathbb{Z}$. From this we deduce that $\phi$ is injective and then is an automorphism. 

Recall that for an abelian group $G$, $T(G)$ is its torsion subgroup and that $G/T(G)$ is a torsion free group. Most of the proofs rely on the following property of odometers.

\begin{lemma}\label{lem:Algebre} If $G$ is an abelian group and ${\bf s} \in G$ is an element of infinite order  
	such that $G/\langle {\bf s}, T(G) \rangle$ is finite, then $G/T(G)$ is a cyclic group isomorphic to $\langle {\bf s} \rangle$. In particular, the quotient  $\left(G/T(G) \right)/\langle {\bf s} \rangle$ is also a cyclic group. 
\end{lemma}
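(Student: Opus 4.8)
The plan is to reduce everything to the torsion-free quotient $H := G/T(G)$ and to show that $H$ is infinite cyclic. First I would pass to $H$ and write $t$ for the image of ${\bf s}$. The key preliminary observation is that $t$ still has infinite order in $H$: if $nt={\bf 0}$ in $H$, then $n{\bf s}\in T(G)$ is a torsion element, so $mn{\bf s}={\bf 0}$ for some $m\geq 1$; since ${\bf s}$ has infinite order this forces $n=0$. Hence $\langle t\rangle\cong\mathbb{Z}$.

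Next I would identify the quotient $H/\langle t\rangle$ using the third isomorphism theorem. Since $\langle {\bf s},T(G)\rangle/T(G)=\langle t\rangle$, we obtain
$H/\langle t\rangle=(G/T(G))/(\langle {\bf s},T(G)\rangle/T(G))\cong G/\langle {\bf s},T(G)\rangle$,
which is finite by hypothesis; call its order $N\geq 1$.

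The heart of the argument is to exploit that $H$ is torsion-free, which is what allows a finite-index copy of $\mathbb{Z}$ to force the whole group to be $\mathbb{Z}$. Because $H/\langle t\rangle$ has order $N$, Lagrange's theorem gives $Nh\in\langle t\rangle$ for every $h\in H$, so multiplication by $N$ defines a homomorphism $H\to\langle t\rangle$. As $H$ is torsion-free, the kernel of this map is trivial, so it is injective. Therefore $H$ embeds into $\langle t\rangle\cong\mathbb{Z}$. Every subgroup of $\mathbb{Z}$ is cyclic, and $H$ is nontrivial since it contains $t$ of infinite order; hence $H\cong\mathbb{Z}\cong\langle {\bf s}\rangle$, which is the first assertion.

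Finally, the ``in particular'' is immediate: the image of $\langle {\bf s}\rangle$ in $G/T(G)=H$ is exactly $\langle t\rangle$, so $(G/T(G))/\langle {\bf s}\rangle=H/\langle t\rangle$ is the finite group of order $N$ computed above, which is cyclic as a finite quotient of $\mathbb{Z}\cong H$. I expect no serious obstacle here; the only points demanding care are the two uses of torsion-freeness of $H$ (to see that $t$ has infinite order and to get injectivity of multiplication by $N$), which together upgrade ``$\langle t\rangle$ has finite index'' to ``$H$ is cyclic''.
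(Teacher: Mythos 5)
Your proof is correct, but it follows a genuinely different route from the paper's. The paper works inside $G$ itself: it takes coset representatives $g_1,\ldots,g_m$ of the finite quotient $G/\langle {\bf s}, T(G)\rangle$, lets $\ell$ be the smallest positive integer with $\ell g \in \langle {\bf s}, T(G)\rangle$ for all $g\in G$, writes $\ell g_i \equiv k_i{\bf s} \bmod T(G)$, observes that minimality of $\ell$ together with torsion-freeness of $G/T(G)$ forces $\gcd(\ell,k_1,\ldots,k_m)=1$, and then uses Bezout to manufacture an explicit element $g$ with $\ell g \equiv {\bf s} \bmod T(G)$ which is shown to generate $G/T(G)$. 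You instead pass immediately to the torsion-free quotient $H=G/T(G)$, identify $H/\langle t\rangle \cong G/\langle {\bf s},T(G)\rangle$ via the third isomorphism theorem, and then run the classical ``finite index plus torsion-free'' argument: multiplication by $N=|H/\langle t\rangle|$ is an injective homomorphism $H\to\langle t\rangle\cong\mathbb{Z}$, so $H$ is a nontrivial subgroup of $\mathbb{Z}$ and hence infinite cyclic. Your argument is shorter and more conceptual, reducing everything to the classification of subgroups of $\mathbb{Z}$; all the steps (the infinite order of $t$, the identification of the quotient, injectivity of multiplication by $N$) are verified correctly. What the paper's more computational proof buys is an explicit generator together with the relation $\ell g \equiv {\bf s} \bmod T(G)$, i.e.\ it exhibits the generator directly as a root of ${\bf s}$ modulo torsion, which is precisely the form in which the lemma is exploited in Theorem \ref{Thm:RootsToeplitz} (the automorphism group is spanned by roots of the shift); in your proof this root structure is still present but only implicitly, since $t$ must be a multiple of a generator of $H$.
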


\begin{proof}  
	Since $G \slash \langle {\bf s}, T(G)\rangle$ is finite, let  $g_{1}, \ldots, g_{m} \in G$ be representatives  for all of its cosets. For every $i \in \{1, \ldots, m\}$ there exists an integer $\ell_{i} \in \mathbb{Z}$ such that $\ell_{i}g_{i} \in \langle {\bf s}, T(G) \rangle$.  Let $\ell$ denote the smallest positive integer such that $\ell g \in \langle {\bf s}, T(G)  \rangle$ for every $g \in G$. It is standard to check that $\ell$ divides  $lcm(\ell_{1}, \ldots, \ell_{m})$.

	Now, for each $i\in \{1,\ldots, m\}$, let $k_{i}$ be the integer such that $\ell g_{i} \equiv  k_{i}{\bf s} \mod T(G)$. Since $G/T(G)$ is torsion free,  the minimality of $\ell$ gives that \\ 
	$(\ell, k_{1}, \ldots, k_{m}) =1$. By Bezout's Theorem there exist integers $a_{0}, \ldots, a_{m}$ such that 
	$$a_{0}\ell + a_{1}k_{1} + \cdots + a_{m}k_m =1.$$

	Therefore, ${\bf s} = (a_{0}\ell + a_{1}k_{1} + \cdots + a_{m}k_m){\bf s}\equiv \ell(a_{0} {\bf s}  + a_{1}g_{1} + \cdots + a_{m}g_m) \mod T(G)$, and consequently there exists $g \in G$ such that $\ell g \equiv {\bf s} \mod T(G)$.  Since $\ell g_{i} \equiv  \ell k_{i}  g \mod T(G)$ for each $i \in \{ 1, \ldots, m\}$ and $G/T(G)$ is torsion free,  it follows $g_{i} \equiv k_{i}g \mod T(G)$. This together with $\ell g \equiv {\bf s} \mod T(G)$ shows that $G/T(G)$ is generated by $g$, thus is cyclic. Clearly $(G/T(G) )\slash \langle {\bf s} \rangle$ is also cyclic.  
\end{proof}

\begin{theorem} \label{Thm:RootsToeplitz}
	Let $(X,\sigma)$ be a Toeplitz subshift and let $T=T({\rm Aut}(X,\sigma))$ be the torsion subgroup of ${\rm Aut}(X,\sigma)$.
	\begin{enumerate}
		\item \label{item:Subquadratic} If ${\rm Aut}(X,\sigma)/\langle T, \sigma  \rangle $ is a periodic group, then any subgroup $H$ of ${\rm Aut (X,\sigma)}$ containing the shift is spanned by the roots of $\sigma \mod T(H)$ in $H$ and $T(H)$. In particular, a torsion free subgroup  $H$ containing $\sigma$ is spanned by the roots of $\sigma$ in $H$.
		\item \label{item:NonSuperlinear} If ${\rm Aut}(X,\sigma)/\langle  \sigma  \rangle $ is finite, then ${\rm Aut}(X,\sigma)$ is isomorphic to $\mathbb{Z} \oplus T$   and $T$ is either trivial or isomorphic to some  $\mathbb{Z}_{N}$.
	\end{enumerate}
\end{theorem}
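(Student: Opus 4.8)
The plan is to treat $G = {\rm Aut}(X,\sigma)$ as an abelian group (it embeds into the associated odometer by Lemma~\ref{lem:AutoExtensiona11}, and $\widehat{\pi}$ sends $\sigma$ to the generator ${\bf 1}$), with $\sigma$ of infinite order, assuming $X$ infinite since the finite case is trivial. Write $T = T(G)$, $\bar G = G/T$ (torsion free), and let $\bar\sigma$ be the image of $\sigma$, which is nonzero. For part~\eqref{item:Subquadratic}, I would first note that the image of $\langle T,\sigma\rangle$ in $\bar G$ is $\langle\bar\sigma\rangle$, so the hypothesis that $G/\langle T,\sigma\rangle$ is periodic says exactly that $\bar G/\langle\bar\sigma\rangle$ is torsion. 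Since $\langle\bar\sigma\rangle\cong\mathbb{Z}$ and the quotient is torsion, $\bar G$ has rank one and hence embeds into $\mathbb{Q}$. Fixing a subgroup $H\le G$ with $\sigma\in H$, and using $T(H)=H\cap T$, the quotient $\bar H := H/T(H)$ embeds into $\bar G$; thus $\bar H$ is a rank-one torsion-free subgroup of $\mathbb{Q}$ with $\bar\sigma\neq 0$ and $\bar H/\langle\bar\sigma\rangle$ torsion.

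The crux is to show $\bar H$ is generated by the roots of $\bar\sigma$, i.e.\ elements $\bar\tau$ with $k\bar\tau=\bar\sigma$ for some $k\ge 1$. Identifying $\bar H\subseteq\mathbb{Q}$ and $\bar\sigma$ with some $r\neq 0$, take $h\in\bar H$ corresponding to $s$ and choose the minimal $m\ge 1$ with $ms\in\mathbb{Z}r$, say $ms=nr$; minimality forces $\gcd(m,n)=1$. A Bezout combination then shows $\tfrac{r}{m}\in\langle h,\bar\sigma\rangle\subseteq\bar H$, and $\tfrac{r}{m}$ is a root of $\bar\sigma$ of which $h$ is an integer multiple. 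Hence every element of $\bar H$ lies in the subgroup generated by roots of $\bar\sigma$, so $\bar H$ is spanned by these roots. Lifting each such root to a $\tau\in H$ with $\tau^{k}\sigma^{-1}\in T(H)$ (that is, a root of $\sigma \bmod T(H)$) and adjoining $T(H)$ yields that $H$ is spanned by $T(H)$ and the roots of $\sigma\bmod T(H)$. When $H$ is torsion free, $T(H)$ is trivial and these become genuine roots of $\sigma$.

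For part~\eqref{item:NonSuperlinear}, the stronger hypothesis that $G/\langle\sigma\rangle$ is finite makes $G/\langle\sigma,T\rangle$ finite, so Lemma~\ref{lem:Algebre} applies directly and gives $\bar G=G/T\cong\langle\sigma\rangle\cong\mathbb{Z}$. The short exact sequence $0\to T\to G\to\mathbb{Z}\to 0$ splits because $\mathbb{Z}$ is free, whence $G\cong\mathbb{Z}\oplus T$. Since $\bar\sigma$ generates the free part, writing $\sigma=(1,t_{0})$ and using the map $(n,t)\mapsto t-nt_{0}$ identifies $G/\langle\sigma\rangle$ with $T$; as the former is finite, $T$ is finite. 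Finally, $T$ is a finite subgroup of the torsion of the odometer, which by Lemma~\ref{lem:torsion} splits into cyclic $p$-primary pieces over distinct primes, so by the Chinese remainder theorem $T$ is cyclic, that is, trivial or isomorphic to some $\mathbb{Z}_{N}$.

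The point I expect to be most delicate is the roots-span argument in part~\eqref{item:Subquadratic}: converting the qualitative statement that $\bar G/\langle\bar\sigma\rangle$ is periodic into an explicit family of roots generating $\bar H$, and then matching the lifted roots precisely to the definition of a root of $\sigma\bmod T(H)$. Once the rank-one embedding into $\mathbb{Q}$ is secured, the remainder is essentially divisibility bookkeeping and the splitting of a sequence with free quotient.
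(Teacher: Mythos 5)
Your proposal is correct in substance, but it takes a genuinely different route on part \eqref{item:Subquadratic} and contains one fixable slip in part \eqref{item:NonSuperlinear}, so let me address both.

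For part \eqref{item:Subquadratic}, the paper never invokes the rank-one classification: for each $\phi \in H$ it passes to the two-generated subgroup $G' = \langle \phi, \sigma \rangle$, notes that the periodicity hypothesis makes the \emph{finitely generated} group $G'/\langle T(G'), \sigma \rangle$ periodic and hence finite, and then applies Lemma \ref{lem:Algebre} to conclude that $G'/T(G')$ is cyclic with some generator $\rho$; this $\rho$ is then a root of $\sigma \bmod T(H)$ with $\phi$ in its span. You instead observe that $\bar G = G/T$ is torsion-free of rank one, embed it (hence $\bar H$) into $\mathbb{Q}$, and run the minimality-plus-Bezout computation in rational coordinates. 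The cores coincide: your minimal $m$ with $ms \in \mathbb{Z}r$ plays exactly the role of the minimal $\ell$ with $\ell g \in \langle {\bf s}, T(G) \rangle$ in the proof of Lemma \ref{lem:Algebre}. What your version buys is that it works directly with periodicity, without the reduction to finite quotients, and the $\mathbb{Q}$-picture makes the divisibility bookkeeping transparent; what the paper's version buys is that Lemma \ref{lem:Algebre} is stated abstractly and is then reused verbatim for part \eqref{item:NonSuperlinear} and for Corollary \ref{Cor:DichotomyRoots}. Your details check out: the identification $G/\langle T, \sigma\rangle \cong \bar G/\langle \bar\sigma\rangle$, the equality $T(H) = H \cap T$, the gcd claim (if $d = \gcd(m,n) > 1$ then $(m/d)s = (n/d)r$ in $\mathbb{Q}$, contradicting minimality), the combination $bh + a\bar\sigma = r/m$, and the lifting of roots mod $T(H)$ are all sound. (Like the paper, you implicitly assume $X$ infinite so that $\sigma$ has infinite order; as you note, this is harmless.)

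In part \eqref{item:NonSuperlinear}, however, the sentence ``since $\bar\sigma$ generates the free part, writing $\sigma = (1,t_0)$ \ldots identifies $G/\langle\sigma\rangle$ with $T$'' is wrong in general: Lemma \ref{lem:Algebre} gives $G/T \cong \mathbb{Z}$ but does \emph{not} make $\bar\sigma$ a generator --- indeed the whole theme of the theorem is that $\sigma$ may admit proper roots. For instance $G/T = \mathbb{Z}$ with $\bar\sigma = 2$ is consistent with $G/\langle\sigma\rangle$ finite, and there your map would identify $G/\langle\sigma\rangle$ with $\mathbb{Z}_2 \oplus T$, not $T$. Fortunately the only thing you use this for, finiteness of $T$, is immediate by other means: $T \cap \langle\sigma\rangle$ is trivial because $\sigma$ has infinite order, so $T$ injects into the finite group $G/\langle\sigma\rangle$; alternatively $G$ is finitely generated (a finite-index extension of $\langle\sigma\rangle$), so its torsion subgroup is finite --- which is how the paper proceeds before invoking Lemma \ref{lem:torsion} and the Chinese remainder theorem for cyclicity, exactly as you do in your last step. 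With that one sentence replaced, your part \eqref{item:NonSuperlinear} coincides with the paper's proof.
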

The statement \eqref{item:Subquadratic} of this theorem applies for instance when $(X,\sigma)$ has subquadratic complexity, by the main result in \cite{CyrKra}.  Statement \eqref{item:NonSuperlinear} applies when $(X,\sigma)$ has non-superlinear complexity \cite{DonosoDurandMaassPetite}. 
Moreover, since ${\rm Aut}(X, \sigma)$ embeds into the odometer $\mathbb{Z}_{(p_{n})}$ associated to $(X, \sigma)$, Lemma \ref{lem:torsion} implies that  any  prime divisor $p$ of $N$    satisfies  $\lim_{n\to\infty} v_{p}(p_{n})$ is positive and finite. 
In this situation, the odometers of the Toeplitz substitutions considered in \cite{CovenQuasYassawi} are groups of $p$-adic integers $\mathbb{Z}_{(p^n)}$, that have no torsion. So it follows that in this case ${\rm Aut }(X,\sigma)$ is itself a cyclic group, which corresponds to Corollary 12 in \cite{CovenQuasYassawi}.

\begin{proof}
	Let $\mathbb{Z}_{(p_n)}$ be the odometer associated to the Toeplitz subshift $(X,\sigma)$ and let  $H$ be a subgroup of ${\rm Aut}(X,\sigma)$ containing $\sigma$. 
	For $\phi \in H$, consider the subgroup $G$ of $H$ spanned by $\phi$ and $\sigma$. 
	By Lemma \ref{lem:AutoExtensiona11} we can see $G$ as a subgroup of the odometer $\mathbb{Z}_{(p_n)}$ and translate the hypothesis to the statement: $G/\langle T(G), {\bf 1} \rangle$ is an abelian finitely generated periodic group. But this implies that  $G/\langle  T(G) , {\bf 1} \rangle$ is in fact finite.  From Lemma \ref{lem:Algebre}, the group  $G\slash T(G)$ is cyclic. In particular, there exist $\rho \in G$ and $m_1,m_2 \in \mathbb{Z}$ such that $\phi \equiv \rho ^{m_1} \mod T(H)$ and $\sigma=\rho^{m_2} \mod T(H)$. This completes \eqref{item:Subquadratic}.
	
	If ${\rm Aut}(X,\sigma)/\langle  \sigma  \rangle $ is finite, then ${\rm Aut}(X,\sigma)$ is finitely generated. So Lemma \ref{lem:torsion} implies that its torsion subgroup $T$ is cyclic. 
	A direct consequence of  Lemma \ref{lem:Algebre} to ${\rm Aut}(X, \sigma)$ is that ${\rm Aut}(X,\sigma)/ T$ is cyclic.  Since ${\rm Aut}(X,\sigma) \simeq  \left({\rm Aut}(X,\sigma)/ T \right) \oplus T$ we have proved statement \eqref{item:NonSuperlinear}. 
\end{proof}

It is worth noting that if $\varphi$ is a root of $\sigma$, then the integer $\ell$ such that $\varphi^\ell=\sigma$ has to be prime with each $p_n$ appearing in the odometer $\mathbb{Z}_{(p_{n})}$. This follows from the fact that the equality 
$\varphi^\ell=\sigma$ (in ${\rm Aut}(X,\sigma)$) is translated into $\ell z={\bf 1}$ for $z \in \mathbb{Z}_{(p_{n})}$, which is possible only if $\ell$ is prime with $p_n$ for all $n\geq 1$. 
This observation allows us to produce examples of Toeplitz subshifts without roots, so where the automorphism group is trivial.

We illustrate Theorem \ref{Thm:RootsToeplitz} and previous comment in the next general construction. We concentrate on part \eqref{item:NonSuperlinear}. Examples for part \eqref{item:Subquadratic} appear in the next section. 

Consider a sequence $(w_n)_{n\geq 1}$ on the finite alphabet $\mathcal{A}\cup \{ \text{?} \}$ such that for each $n\geq 1$, $|w_n| = q_n\geq 3$ and, for $n\geq 2$, $w_n=u_n?v_n$, where $u_n$ and $v_n$ are non empty words on the alphabet $\mathcal{A}$. Thus $w_n$ contains exactly one  symbol $?$.
Now define the sequence $(W_n)_{n\geq 1}$ by: $W_1=w_1 ^\infty=\ldots w_1w_1.w_1w_1\ldots$, where the central dot indicates the position to the left of the zero coordinate, and 
$W_{n+1}=F_{W_n}(w_{n+1}^{\infty})$ for every $n\geq 1$. Here, $F_{W_n}(w_{n+1}^{\infty})$
is the sequence obtained from $W_n$ replacing consecutively all the symbols $?$ by the sequence 
$w_{n+1}^{\infty}$, where $(w_{n+1}^{\infty})_0$ is placed in the first $?$ to the right of  coordinate $0$. The map $F$ will be studied in more details in next section. 

Since symbol $\text{?}$ moves away from zero coordinate with $n$, then the sequence $(W_n)_{n\geq 2}$ converges to a point $x\in\mathcal{A}^\mathbb{Z}$. In addition, each coordinate of $x$ is periodic with periods in the sequence $(p_n)_{n\geq 1}$, where $p_n=q_1\cdots q_n$. Hence $x$ is a Toeplitz sequence. We let the reader check that one can choose the words $u_n$'s and $v_n$'s to construct a non periodic sequence $x$. Also, special choices of the $u_n$'s and $v_n$'s allow us to prove that $(p_n)_{n\geq 1}$ is the sequence of essential periods of $x$ (for example consider two different letters  $a,b \in {\mathcal A}$ and take $u_n=a$, $v_n=b^{q_n-2}$). So, under this assumption we have that 
$\mathbb{Z}_{(p_n)}$ is the odometer associated with $X$, the orbit closure of $x$ by the shift map.
Finally, remark that any word of length $p_n$ appearing in $x$ can be constructed filling one symbol $\text{?}$ in $W_n$, which is a periodic sequence of period $p_n$. Then, the complexity of $X$ verifies $p_X(p_n)\leq |{\mathcal A}| \ p_n$, and thus $X$ has non-superlinear complexity. 

Fixing the values of $(q_n)_{n\geq 1}$ in such a way that $p_{n}=n!$ for all $n\geq 3$ we get a Toeplitz subshift whose odometer is the universal one. In this case, by Lemma \ref{lem:torsion} we have that ${\rm Aut}(X,\sigma)$ is torsion free and by Theorem \ref{Thm:RootsToeplitz} it is spanned by the roots of $\sigma$. But by the discussion after previous theorem the only possible root is the shift itself. Again, by Lemma \ref{lem:torsion}, one can make other choices of the sequence $(q_n)_{n\geq 1}$ in such a way that ${\rm Aut}(X,\sigma)$ is torsion free and isomorphic to $\mathbb{Z}$. 

The same construction together with Corollary \ref{Corollary:AutoProductToeplitz} allow us to get Toeplitz subshifts with non-superlinear complexity such that ${\rm Aut}(X,\sigma)$ is isomorphic with  $\mathbb{Z} \oplus \mathbb{Z}_N$ for any $N\in \mathbb{N}$. Indeed, in previous construction consider $p$ a prime number not dividing $N$ and set $q_n=p$ for any $n\geq 1$. Then consider the cartesian product system 
$(X\times \mathbb{Z}_N,\sigma\times +{\bf 1})$, which, by the mentioned corollary, is a Toeplitz subshift with the desired automorphism group.

As consequence of the discussion of this section we can formulate the following dichotomy. 

\begin{corollary} \label{Cor:DichotomyRoots}
	Let $(X,\sigma)$ be a Toeplitz subshift such that ${\rm Aut}(X,\sigma)/\langle \sigma \rangle$ is a periodic group. If ${\rm Aut}(X,\sigma)$ is torsion free, then either ${\rm Aut}(X,\sigma)$ is cyclic or ${\rm Aut}(X,\sigma)$ is not finitely generated. 
\end{corollary}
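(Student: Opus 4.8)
The plan is to show that torsion-freeness of $G:={\rm Aut}(X,\sigma)$ together with periodicity of $G/\langle\sigma\rangle$ forces $G$ to have rank one, and then to invoke the elementary structure theory of rank-one torsion-free abelian groups, for which the asserted dichotomy is classical. First I would dispose of the degenerate cases: if $X$ is a single point then $G$ is trivial, hence cyclic, while if $X$ is finite and nontrivial then $\sigma$ has finite order and $G$ contains torsion, contradicting the hypothesis. So we may assume $X$ is infinite, whence $\sigma\neq{\rm id}$; since $G$ is torsion free this means $\sigma$ has infinite order and $\langle\sigma\rangle\cong\mathbb{Z}$. Recall also that $G$ is abelian, being by Lemma \ref{lem:AutoExtensiona11} a subgroup of the associated odometer $\mathbb{Z}_{(p_n)}$.

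Next I would establish that $G$ has rank one. The hypothesis that $G/\langle\sigma\rangle$ is periodic says exactly that for every $\phi\in G$ there is $n\geq 1$ with $\phi^{n}\in\langle\sigma\rangle$. Tensoring the short exact sequence $0\to\langle\sigma\rangle\to G\to G/\langle\sigma\rangle\to 0$ with the flat $\mathbb{Z}$-module $\mathbb{Q}$ keeps it exact and annihilates the torsion quotient $G/\langle\sigma\rangle$, so $G\otimes\mathbb{Q}=\langle\sigma\rangle\otimes\mathbb{Q}\cong\mathbb{Q}$ is one-dimensional. Thus $G$ is a torsion-free abelian group of rank exactly one.

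Finally, I would conclude from the classification of rank-one torsion-free abelian groups: every such group embeds into $\mathbb{Q}$. Under such an embedding $G$ is realized as a subgroup of $\mathbb{Q}$, and any finitely generated subgroup of $\mathbb{Q}$, say generated by $a_1/b_1,\ldots,a_k/b_k$, is contained in the cyclic group $\tfrac{1}{b_1\cdots b_k}\mathbb{Z}$ and is therefore itself cyclic. Hence $G$ is either finitely generated, in which case it is cyclic, or it is not finitely generated, which is precisely the stated dichotomy.

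I do not expect a genuine obstacle: the statement is a formal consequence of Theorem \ref{Thm:RootsToeplitz} and elementary abelian group theory, and the only points needing a little care are the reduction to the infinite case (to guarantee $\sigma$ has infinite order) and the identification of $G$ with a subgroup of $\mathbb{Q}$. As an alternative more in the spirit of this section, one can bypass the rank computation by applying Theorem \ref{Thm:RootsToeplitz}\eqref{item:Subquadratic} directly: since $G$ is torsion free it is spanned by the roots of $\sigma$, and, viewing $G\leq\mathbb{Z}_{(p_n)}$ with $\sigma\mapsto{\bf 1}$, the remark following that theorem shows each such root $\varphi$ satisfies $\ell\varphi={\bf 1}$ for some integer $\ell$ coprime to every $p_n$. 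All roots then lie in the subgroup $\{z\in\mathbb{Z}_{(p_n)}:\ell z\in\langle{\bf 1}\rangle\text{ for some }\ell\text{ coprime to all }p_n\}$, which is isomorphic to a subgroup of $\mathbb{Q}$ via $\tfrac{1}{\ell}{\bf 1}\mapsto 1/\ell$ (well defined and injective because such $\ell$ act invertibly on the odometer), and one concludes exactly as above.
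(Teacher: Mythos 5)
Your proposal is correct, but your main argument takes a genuinely different route from the paper's. The paper disposes of the corollary in one line via Theorem \ref{Thm:RootsToeplitz}\eqref{item:Subquadratic}: since ${\rm Aut}(X,\sigma)$ is torsion free it is spanned by the roots of $\sigma$, and Lemma \ref{lem:Algebre} (applied to the finitely generated subgroup containing $\sigma$ and any finite set of roots) shows any finitely many roots admit a common root; hence if the group is spanned by finitely many roots it is cyclic, and otherwise it is not finitely generated. You instead bypass the roots machinery entirely: periodicity of $G/\langle\sigma\rangle$ plus flatness of $\mathbb{Q}$ gives $G\otimes\mathbb{Q}\cong\mathbb{Q}$, torsion-freeness makes the natural map $G\to G\otimes\mathbb{Q}$ injective, so $G$ embeds in $\mathbb{Q}$, and finitely generated subgroups of $\mathbb{Q}$ are cyclic. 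This needs nothing about Toeplitz subshifts beyond abelianness of the automorphism group (Lemma \ref{lem:AutoExtensiona11}) --- in particular neither Lemma \ref{lem:Algebre} nor Theorem \ref{Thm:RootsToeplitz} --- and it isolates the purely group-theoretic content: a torsion-free abelian group with periodic quotient by an infinite cyclic subgroup has rank one. What the paper's route buys in exchange is the dynamical refinement that the generators can be taken to be roots of $\sigma$, which is exactly what gets exploited in Section \ref{sec:pqtoeplitz}; your rank-one argument yields the dichotomy but not that structural information. Your handling of the degenerate cases (finite $X$, so that $\sigma$ has infinite order and $\langle\sigma\rangle\cong\mathbb{Z}$) is a point the paper leaves implicit, and it is correct: a nontrivial finite $X$ makes $\sigma$ a nontrivial torsion element, so the hypothesis fails vacuously. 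Finally, your alternative sketch --- embedding $\{z\in\mathbb{Z}_{(p_n)}:\ell z\in\langle{\bf 1}\rangle$ for some $\ell$ coprime to all $p_n\}$ into $\mathbb{Q}$ --- is sound (well-definedness and injectivity follow because such $\ell$ act invertibly on the odometer and ${\bf 1}$ has infinite order) and is essentially the paper's roots-of-$\sigma$ viewpoint made explicit.
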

\begin{proof}
	The dichotomy follows from considering the cases whether ${\rm Aut}(X,\sigma)$ is spanned by finitely or infinitely many roots of $\sigma$.
\end{proof}
We can think of this result as a consequence of the algebraic structure of the automorphism group of a Toeplitz subshift, which for given automorphisms $\phi_1$ and $\phi_2$ allow us to take a ``common divisor'', i.e. an automorphism $\phi$ such $\phi_1$ and $\phi_2$ belong to $\langle \phi \rangle$ (assuming that there is no torsion).  This situation has to be contrasted with the case of a mixing shift of finite type, where the shift always admits only finitely many roots (see the discussion after Problem 3.5 in \cite{BoyleLindRudolph}).

 In the next section we exhibit some examples where the shift does have infinitely many roots. It remains open whether we can give a precise description of the roots of the shift map generating the automorphism groups in previous theorems.

\section{Not finitely generated automorphism groups for Toeplitz subshifts}
\label{sec:pqtoeplitz}

As proved in Theorem \ref{Thm:RootsToeplitz}, 
the quotient of the automorphism group by the torsion subgroup for a Toeplitz subshift of subquadratic complexity is generated by the roots of the shift map. In the particular case of non-superlinear complexity the automorphism group modulo its torsion subgroup is virtually $\mathbb{Z}$ and generated by a unique root. In this section we prove that the last result is in some sense optimal. That is, we can construct Toeplitz subshifts of subquadratic complexity such that the automorphism group is even not finitely generated. The same construction allows to get Toeplitz subshifts of arbitrary polynomial complexity whose automorphism groups are not finitely generated. 

The main result of the section is the following, 

\begin{theorem} \label{Thm:LowComplexityToeplitz}
	For every $\epsilon>0$, there exists a Toeplitz subshift $(X,\sigma)$ such that the complexity verifies $\frac{p_X(n)}{n}\leq C n^{\epsilon}$ for all $n \geq 1$ and whose automorphism group is torsion free and not  finitely generated. 
\end{theorem}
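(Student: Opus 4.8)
The plan is to build $(X,\sigma)$ within the $(p,q)$-Toeplitz family of \cite{CK}, extending Salo's example \cite{Salo}, so that $\sigma$ admits roots of infinitely many distinct prime orders while keeping the complexity barely superlinear. The first step is the algebraic reduction. I would arrange the periodic structure $(p_n)_{n\ge1}$, with $p_n=q_1\cdots q_n$, to be supported on a fixed finite set of primes, each occurring with unbounded multiplicity; by Lemma \ref{lem:torsion} the associated odometer $\mathbb{Z}_{(p_n)}$ is then torsion free, and since ${\rm Aut}(X,\sigma)$ embeds into it via Lemma \ref{lem:AutoExtensiona11}, the automorphism group is torsion free as well. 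As the target complexity $p_X(n)\le Cn^{1+\epsilon}$ is subquadratic, the main result of \cite{CyrKra} guarantees that ${\rm Aut}(X,\sigma)/\langle\sigma\rangle$ is periodic, so Corollary \ref{Cor:DichotomyRoots} applies: the group is either cyclic or not finitely generated. The whole problem thus reduces to making $\sigma$ genuinely non-cyclic, for which it suffices to exhibit roots $\phi_k$ with $\phi_k^{\ell_k}=\sigma$ for an infinite set of pairwise distinct primes $\ell_k$, none of which divides any $p_n$.

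Next I would produce these roots inside the recursive filling scheme of the previous section. By the remark following Theorem \ref{Thm:RootsToeplitz}, an order-$\ell$ root of $\sigma$ corresponds under $\widehat\pi$ to the unique odometer element $z$ with $\ell z=\mathbf{1}$, which exists exactly because $\gcd(\ell,p_n)=1$ for all $n$. The content of the construction is to make this ``fractional shift'' $z_k=\ell_k^{-1}\mathbf{1}$ descend to a sliding block code on $X$. I would design the partially filled words so that, at a sparse increasing sequence of scales $n_1<n_2<\cdots$, the coloring carried on the skeleton is invariant under the odometer rotation by $z_k$; equivalently, rotating the phase of every skeleton class by $z_k$ and reading off the symbols again yields a point of $X$. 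The resulting map has a bounded radius, so by the Curtis--Hedlund--Lyndon theorem it is an endomorphism $\phi_k$, and by the coalescence established in Section \ref{Sec:ToeplitzSubquadratic} it is an automorphism; computing its image in the odometer confirms $\phi_k^{\ell_k}=\sigma$.

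Non-finite generation then follows from the dichotomy. If ${\rm Aut}(X,\sigma)$ were cyclic, generated by some $\rho$ with $\rho^{m}=\sigma$, then each inclusion $\phi_k\in\langle\rho\rangle$ would force $\ell_k\mid m$, which is impossible for infinitely many distinct primes $\ell_k$. Hence the group is not cyclic, and being torsion free with periodic quotient by $\langle\sigma\rangle$, Corollary \ref{Cor:DichotomyRoots} shows it is not finitely generated.

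The delicate part, and the one I expect to be the main obstacle, is the complexity estimate. Each symmetric scale $n_k$ enlarges the language, since the $\ell_k$-fold freedom in the placement of symbols multiplies the number of locally admissible length-$p_{n_k}$ patterns; if the scales were too dense or the orders $\ell_k$ grew too fast, this would overwhelm any polynomial bound. The plan is to keep the intervening words of the cheap single-hole type of the previous section, which contribute only the linear estimate $p_X(p_n)\le|\mathcal{A}|\,p_n$, and to make the symmetric scales $n_k$ sparse and the primes $\ell_k$ grow slowly, so that the multiplicative overhead accumulated up to any length $n$ is at most $n^{\epsilon}$. The technical core is then the uniform bookkeeping: counting words of every length $n$, not merely of the special lengths $p_n$, and verifying $p_X(n)/n\le Cn^{\epsilon}$ for all $n\ge1$. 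The freedom to let $\ell_k\to\infty$ as slowly as desired is precisely what allows $\epsilon$ to be taken arbitrarily small, and the same scheme with faster-growing parameters yields the companion examples of arbitrarily large polynomial complexity.
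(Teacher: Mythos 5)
Your algebraic frame is sound: torsion-freeness of $\mathrm{Aut}(X,\sigma)$ via Lemmas \ref{lem:torsion} and \ref{lem:AutoExtensiona11}, periodicity of $\mathrm{Aut}(X,\sigma)/\langle\sigma\rangle$ from \cite{CyrKra}, and the reduction via Corollary \ref{Cor:DichotomyRoots} to ruling out cyclicity are all correct. But the heart of the proof is missing. You reduce everything to producing automorphisms $\phi_k$ with $\phi_k^{\ell_k}=\sigma$ for infinitely many distinct primes $\ell_k$, and then only assert that you ``would design the partially filled words'' so that the fractional rotations $z_k=\ell_k^{-1}\mathbf{1}$ descend to sliding block codes. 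That design is precisely the content of the theorem: a Toeplitz extension of an odometer generically has \emph{no} automorphisms beyond $\langle\sigma\rangle$ (cf.\ Theorem \ref{Thm:EntropyToeplitz2}), so each root must be extracted from a concrete combinatorial mechanism, which you never specify. Worse, imposing scale-dependent symmetric structure at a sparse sequence of scales takes you outside the $(p,q)$-Toeplitz family of a single iterated word, so the Cassaigne--Karhum\"aki bound \eqref{eq:complexity} no longer applies, and the complexity estimate --- which you yourself flag as the main obstacle --- would have to be rebuilt from scratch; you offer only the intention to do ``uniform bookkeeping''. Thus both pillars of the statement (existence of the roots, and $p_X(n)\le Cn^{1+\epsilon}$ for all $n$) remain unproved.

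The paper shows the detour through infinitely many distinct primes is unnecessary. Fix a single word $w$ of length $p$ with $q$ holes, $(p,q)=1$ and $p$ large relative to $q$ so that $\log q/(\log p-\log q)\le\epsilon$; then \eqref{eq:complexity} gives the complexity bound with no extra counting. The self-reading property (Proposition \ref{prop:basicproperties}) yields for every $n$ the explicit map $\varphi_n(z)=\sigma^{m}F_{S_{p^n}(x)}(\sigma(H_n(z)))$, which fixes the $p^n$-skeleton and shifts the symbols sitting in its holes by one; it is an automorphism satisfying $\varphi_n^{q^n}=\sigma^{p^n}$, and $q^n$ is the least $\ell$ with $\varphi_n^{\ell}\in\langle\sigma\rangle$ (if $\varphi_n^{\ell}=\sigma^{r}$ then $\ell p^n=rq^n$ by aperiodicity, and coprimality forces $q^n\mid\ell$). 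Hence $\langle\{\varphi_n: n\in\mathbb{N}\},\sigma\rangle/\langle\sigma\rangle$ is an infinite periodic abelian group, and since finitely generated torsion abelian groups are finite, $\mathrm{Aut}(X,\sigma)$ is not finitely generated --- with no appeal to the dichotomy at all. Note finally that even within your own framing, roots of orders $q^n$ for one fixed $q$ already defeat cyclicity (a generator $\rho$ with $\rho^{m}=\sigma$ would force $q^n\mid m$ for all $n$), so distinct primes buy nothing; and your exact equations $\phi_k^{\ell_k}=\sigma$ would in any case be obtained from $\varphi_n^{q^n}=\sigma^{p^n}$ by a B\'ezout adjustment, as in the last corollary of Section \ref{sec:pqtoeplitz}.
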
 

To achieve this, we make use of the class of $(p,q)$-Toeplitz subshifts. In \cite{CK}, Cassaigne and Karhum\"aki introduced this class and established the fundamental properties we discuss in the sequel. In this class one can get complexities that are arbitrarily close to non-superlinear (but always superlinear). This class was implicitly used by Salo \cite{Salo} to give an example of a subshift of complexity $p_X(n)\leq Cn^{1.757}$ with a non-finitely generated automorphism group. We simplify and extend his result in the generality of Theorem \ref{Thm:LowComplexityToeplitz}. We start introducing the basic notions from \cite{CK}, then we use freely their results. It is worth noting that in \cite{CK} the construction was carried out for one sided subshifts, but all can be extended without any problem to the two sided case.

\subsection{$(p,q)$-Toeplitz subshifts}
We refer to Section 2 in \cite{CK} for a detailed discussion on next properties and concepts. 

Let $\mathcal{A}$ be a finite alphabet and $?$ a letter not in $\mathcal{A}$ (usually the symbol $?$ is referred as a ``hole'').
Let $x$ $\in (\mathcal{A}\cup \{ \text{?} \})^{\mathbb{Z}}$. The sequence $x$ represents a sequence over the alphabet $\mathcal{A}$ with holes. Given $x, y \in (\mathcal{A}\cup \{\text{?}\})^{\mathbb{Z}}$, define 
$F_x(y)$ as the sequence obtained from $x$ replacing consecutively all the $?$ by the symbols of $y$, where $y_0$ is placed in the first $?$ to the right of coordinate $0$. In particular, if 
$x$ has no holes, $F_{x}(y)=x$ for every $y \in (\mathcal{A}\cup \{\text{?}\})^{\mathbb{Z}}$. 
In addition, observe that:
\begin{equation}\label{eq:magicformula}
\text{ if } \ z=F_{x}(y) \text{ then } \ F_{z}=F_{x}\circ F_{y}.
\end{equation}

Now, consider a finite word $w$ in $\mathcal{A}\cup \{ \text{?}\}$.  
Let $p$ be the length of $w$ and $q$ the number of its holes.
Denote by $w^\infty$ the sequence $\cdots www.www\cdots \in (\mathcal{A}\cup \{ \text{?} \})^{\mathbb{Z}}$, where the central dot indicates the position to the left of coordinate $0$. 
We define the sequence $(T_n(w))_{n\geq 1}$ by: $T_1(w)=w^\infty$ and 
$T_{n+1}(w)=F_{w^{\infty}}(T_n(w))$ for every $n\geq 1$. It is not complicated to see that each 
$T_n(w) = u_{n}^{\infty}$  for some word $u_{n}$ of length $p^n$ and  $u_{n}$ has $q^n$ holes.
We have that the limit $\displaystyle x=\lim_{n\to \infty} T_n(w)$ is well defined as a sequence in $(\mathcal{A}\cup \{\text{?}\})^{\mathbb{Z}}$. Moreover, if $w$ does not start or finish with a hole, then the limit sequence belongs to $\mathcal{A}^{\mathbb{Z}}$, {\em i.e.}, $x$ has no holes. 
The point $x$ is called a $(p,q)$-{\em Toeplitz sequence}.
Its orbit closure under the shift map $X$ is called a $(p,q)$-{\em Toeplitz subshift}. 

One of the main results in \cite[Theorem 5]{CK}  states that the complexity of a non periodic
$(p,q)$-Toeplitz is $\Theta(n^r)$, where $r=\log(p/d)/\log(p/q)$ and $d=(p,q)$.

Now suppose that $(p,q)=1$.
Then, there exist positive constants $C_1$ and $C_2$ such that 
\begin{equation}\label{eq:complexity}
C_1 n^{1+\frac{\log q}{\log p -\log q}} \leq p_X(n)\leq C_2 n^{1+\frac{\log q}{\log p -\log q} } ,\quad \forall n\geq 1.
\end{equation}
Moreover, the length of $|u_n|$ above is the smallest possible, $(p^n)_{n\geq 1}$ is a periodic structure for $x$ and the associated odometer is given by $\mathbb{Z}_{(p^n)}$ which is torsion free by Lemma \ref{lem:torsion}. 
Therefore, thanks to Lemma \ref{lem:AutoExtensiona11} we get the same conclusion for ${\rm Aut}(X,\sigma)$. 

If $p$ is large enough compared with $q$, we have that $|\frac{\log q}{\log p -\log q}|\leq \epsilon$. Thus, $(p,q)$-Toeplitz subshifts is a natural class to study to prove Theorem \ref{Thm:LowComplexityToeplitz}.

\subsection{Self reading properties of $(p,q)$-Toeplitz}
Let $w$ be a finite word in  $(\mathcal{A}\cup \{?\})$ of length $p$ and $q$ holes with $(p,q)=1$. Let $x=\lim_{n\to \infty} T_n(w)$ be the $(p,q)$-Toeplitz sequence generated and $X$ its orbit closure under the shift. The purpose of this section is to prove the main consequences of the 
so called ``self-reading'' property of $x$. 
First we summarize some basic results, some were already discussed in \cite{CK}, others need to be proved. 

\begin{proposition}\label{prop:basicproperties} We have, 
	\begin{enumerate} 
		\item \label{item:fixpoint} For every $n\geq 1$, the map $F_{T_n(w)}\colon (\mathcal{A}\cup \{?\})^{\mathbb{Z}}\to (\mathcal{A}\cup \{?\})^{\mathbb{Z}}$ is continuous and $F_{{T_n(w)}}(x)=x$.  
		\item \label{item:sigmapq} $F_{{T_n(w)}}(\sigma^{q^n} y)=\sigma^{p^n}F_{{T_n(w)}}(y)$ for every $y\in (\mathcal{A}\cup \{?\})^{\mathbb{Z}}$.
		\item \label{item:MaxEqFactor} The skeleton structure of $x$ is $(T_n(w))_n$, {\em i.e.,} $S_{p^n}(x) = T_{n}(w)$ for all $n\geq 1$. 
		\item \label{item:Minimalq^i} The transformation $\sigma^{q^n}$ is minimal in $X$ for every $n\geq 1$.
		\item \label{item:F_W} $F_{T_n(w)}$ leaves $X$ invariant. 
	\end{enumerate}
\end{proposition}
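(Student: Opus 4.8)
The plan is to handle the five items in order, after fixing once and for all a bookkeeping for the holes. Fix $n$ and let $H_n\subseteq\mathbb{Z}$ be the set of positions carrying the symbol $?$ in $T_n(w)=u_n^\infty$; since $u_n$ has length $p^n$ and $q^n$ holes, $H_n$ is $p^n$-periodic with exactly $q^n$ holes per period. Enumerate $H_n=\{h_k\}_{k\in\mathbb{Z}}$ increasingly with $h_0=\min\{h\in H_n : h\geq 0\}$; then by definition $F_{T_n(w)}(y)_{h_k}=y_k$ and $F_{T_n(w)}(y)_m=T_n(w)_m$ for $m\notin H_n$, and periodicity of $H_n$ yields the crucial identity $h_{k+q^n}=h_k+p^n$. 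For item \eqref{item:fixpoint}, continuity is immediate from this coordinate description: each output coordinate depends on at most one input coordinate, so a constraint on finitely many output coordinates pulls back to a constraint on finitely many input coordinates. For the fixed point, I would iterate \eqref{eq:magicformula} on $T_{n+1}(w)=F_{w^\infty}(T_n(w))$ to get $F_{T_n(w)}=F_{w^\infty}^{\,n}$, and then, writing $G=F_{w^\infty}$ and $x=\lim_m G^{m-1}(w^\infty)$, use continuity of $G$ (the case $n=1$ just proved) to compute $F_{T_n(w)}(x)=G^n(x)=\lim_m G^{n+m-1}(w^\infty)=x$. Item \eqref{item:sigmapq} is then a direct coordinate check using $h_{k+q^n}=h_k+p^n$ and the $p^n$-periodicity of $T_n(w)$: both sides equal $T_n(w)_m$ off the holes and equal $y_{k+q^n}$ at $h_k$.

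For item \eqref{item:MaxEqFactor} I would use item \eqref{item:fixpoint}: since $x=F_{T_n(w)}(x)$, we have $x_m=T_n(w)_m$ for $m\notin H_n$, and as $H_n$ is $p^n$-periodic this forces $m\in Per_{p^n}(x)$; hence $\mathbb{Z}\setminus H_n\subseteq Per_{p^n}(x)$ with matching values. The remaining point — that no hole is $p^n$-periodic — is where coprimality enters. At a hole $h_k$, combining $h_{k+jq^n}=h_k+jp^n$ with $x_{h_i}=x_i$ (again from $x=F_{T_n(w)}(x)$) shows that $h_k\in Per_{p^n}(x)$ would force $k\in Per_{q^n}(x)$. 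But $Per_{q^n}(x)=\emptyset$ for the nonperiodic $x$: if $m\in Per_{q^n}(x)$, then for an arbitrary $\ell$ I would pick $M$ large enough that both $m$ and $m+\ell$ lie in $Per_{p^M}(x)$, write $\ell=rq^n+sp^M$ using $(q^n,p^M)=1$, and deduce $x_{m+\ell}=x_{m+rq^n}=x_m$, so $x$ is constant, a contradiction. Hence $Per_{p^n}(x)=\mathbb{Z}\setminus H_n$ and $S_{p^n}(x)=T_n(w)$.

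Item \eqref{item:Minimalq^i} is the main obstacle. The strategy is to exploit that $(X,\sigma)$ is an almost one-to-one extension $\pi$ of the odometer $\mathbb{Z}_{(p^n)}$ with $\sigma$ projecting to $+\mathbf{1}$. Since $(q^n,p)=1$, Lemma \ref{lem:generalizedminimality} gives that $+q^n\mathbf{1}$ is minimal on the odometer. Fix a base point $z_0$ with singleton fiber, which we may take to be $\pi(x)=\mathbf{0}$ with fiber $\{x\}$; the singleton-fiber property then upgrades odometer convergence to convergence in $X$. I would first show $\overline{\mathrm{Orb}_{\sigma^{q^n}}(x)}$ is $\sigma$-invariant, hence equals $X$ by $\sigma$-minimality: solving $q^n k\equiv 1 \pmod{p^m}$ (possible as $(q^n,p^m)=1$) produces integers $k_m$ with $(q^n k_m-1)\mathbf{1}\to\mathbf{0}$, so $\sigma^{q^n k_m-1}x\to x$ and therefore $\sigma x\in\overline{\mathrm{Orb}_{\sigma^{q^n}}(x)}$. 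Then for arbitrary $a\in X$, the set $\pi(\overline{\mathrm{Orb}_{\sigma^{q^n}}(a)})$ is closed and $+q^n\mathbf{1}$-invariant, hence the whole odometer, so it contains $\mathbf{0}$ and thus $x$; this forces $\overline{\mathrm{Orb}_{\sigma^{q^n}}(a)}\supseteq\overline{\mathrm{Orb}_{\sigma^{q^n}}(x)}=X$, giving minimality of $\sigma^{q^n}$. The delicate interplay between the almost one-to-one lifting and the two uses of $(p,q)=1$ (odometer minimality and the congruence) is what I expect to be the hardest part.

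Finally, item \eqref{item:F_W} follows quickly from the previous ones. By items \eqref{item:fixpoint} and \eqref{item:sigmapq}, for every $k\in\mathbb{Z}$ one has $F_{T_n(w)}(\sigma^{q^n k}x)=\sigma^{p^n k}F_{T_n(w)}(x)=\sigma^{p^n k}x\in X$, so $F_{T_n(w)}$ maps the $\sigma^{q^n}$-orbit of $x$ into $X$. This orbit is dense in $X$ by item \eqref{item:Minimalq^i}, and continuity of $F_{T_n(w)}$ together with closedness of $X$ then give $F_{T_n(w)}(X)\subseteq X$.
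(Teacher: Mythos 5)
Your proposal is correct, and for items \eqref{item:fixpoint}, \eqref{item:sigmapq} and \eqref{item:F_W} it coincides with the paper's proof: the same iteration of \eqref{eq:magicformula} giving $F_{T_n(w)}=(F_{w^{\infty}})^{n}$, the same coordinate check using the $p^n$-periodicity of the holes, and the same density-plus-continuity argument. The differences are in items \eqref{item:MaxEqFactor} and \eqref{item:Minimalq^i}. For item \eqref{item:MaxEqFactor} the paper simply declares the statement ``direct'' (leaning on the results of Cassaigne--Karhum\"aki that $(p^n)_{n\geq 1}$ is a periodic structure), whereas you actually prove it: the observation that a $p^n$-periodic hole $h_k$ would force $k\in Per_{q^n}(x)$, together with the Bezout argument showing $Per_{q^n}(x)=\emptyset$, is correct, and it rightly isolates the hypothesis that $x$ is non-periodic --- without which the statement is false (e.g.\ $w=a?$ gives $x=a^{\infty}$ and $S_{p^n}(x)\neq T_n(w)$), so your version is arguably more honest than the paper's. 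For item \eqref{item:Minimalq^i}, both proofs rest on Lemma \ref{lem:generalizedminimality} plus the almost one-to-one structure, but the mechanics differ: the paper takes an arbitrary nonempty closed $\sigma^{q^n}$-invariant set $A$, projects it onto the whole odometer, and concludes $A=X$ because $A$ must then contain the dense set of points lying in singleton fibers; you instead exploit the single concrete fiber $\pi^{-1}(\mathbf{0})=\{x\}$, first making $\overline{{\rm Orb}_{\sigma^{q^n}}(x)}$ $\sigma$-invariant via the congruences $q^nk_m\equiv 1 \pmod{p^m}$, and then pulling $x$ into every $\sigma^{q^n}$-orbit closure. Your route is more self-contained (it avoids invoking that the unique-preimage points form a dense $G_\delta$), at the cost of the extra congruence step; the one assertion you should justify rather than state is precisely $\pi^{-1}(\mathbf{0})=\{x\}$, which follows from the facts recalled in Section \ref{Sec:BackgroundToeplitz}: any $y$ with $\pi(y)=\mathbf{0}$ satisfies $S_{p^n}(y)=S_{p^n}(x)$ for all $n$, hence agrees with $x$ on $\bigcup_{n}Per_{p^n}(x)=\mathbb{Z}$.
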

\begin{proof}
	Continuity in statement \eqref{item:fixpoint} is direct from definition. Now, from \eqref{eq:magicformula} we have that $T_{n+1}(w)=F_{w^{\infty}}(T_{n}(w))$, and thus $F_{T_{n+1}(w)}=F_{w^{\infty}}\circ F_{T_n(w)}$, which implies $F_{T_{n}(w)}= (F_{w^{\infty}})^{n}$. 
	From this equality we deduce $F_{T_{n}(w)}(x)=\lim_{m\to \infty} (F_{w^{\infty}})^{n+m}(w^{\infty})=x$. This proves \eqref{item:fixpoint}.
	
	Statement \eqref{item:sigmapq} follows from the fact that each $T_n(w)$ is periodic of period $p^n$ and contains $q^n$ holes. Statement \eqref{item:MaxEqFactor} is direct.
	
	Let $\pi$ denote the factor map from $X$ to $\mathbb{Z}_{(p^n)}$. 
	Since $(p,q)=1$, statement (\ref{item:Minimalq^i}) follows from the fact that translation 
	by ${q^n}{\bf 1}$ acts minimally in the odometer $\mathbb{Z}_{(p^n)}$ by Lemma \ref{lem:generalizedminimality}, for every $n\geq 1$. Indeed, if $A \subseteq X$ is closed and invariant under $\sigma^{q^n}$, 
	then $\pi(A)$ is also closed (since $X$ is compact) and invariant.  By minimality we get that $\pi(A)=\pi(X)=\mathbb{Z}_{(p^n)}$. But $\pi$ is almost one-to-one and thus $A$ contains all points with one preimage (which is a $G_{\delta}$-set). Since $A$ is closed we get $A=X$. This proves (\ref{item:Minimalq^i}).
	
	We finally notice that properties \eqref{item:fixpoint} and \eqref{item:sigmapq} 
	imply that $F_{{T_n(w)}}((\sigma^{q^n})^m x) \in X$ for all $n\geq 1$ and $m\in \mathbb{Z}$. The minimality of $\sigma^{q^n}$ implies that $F_{{T_n(w)}}$ leaves invariant $X$, proving (\ref{item:F_W}).  
\end{proof}

\subsection{Proof of Theorem \ref{Thm:LowComplexityToeplitz}}

First, we choose a word $w$ of length $p$ and with $q$ holes in $\mathcal{A}\cup\{ ?\}$ such that the associated limit sequence $x$ is  non periodic and in $\mathcal{A}^\mathbb{Z}$. It is enough to avoid the symbol $?$ in the first and last coordinates of $w$. Recall we are considering $p$ and $q$ to be relatively primes. 
Notice that, by \eqref{eq:complexity}, within the family of $(p,q)$-Toeplitz subshifts with $p$ and $q$ relatively primes we can get for any $\epsilon >0$ complexities such that for some constant $C>0$, ${p_X(n)}/{n}\leq C n^{\epsilon}$, for all $n \geq 1$. It remains to prove that the automorphism group of the subshift $X$ generated by $x$ is not finitely generated. 

Let $n \geq 1$ and $z \in X$. As observed in Section \ref{Sec:BackgroundToeplitz}, we can find  \ a unique $m=m(n,z)\in \{0,\ldots,p^n-1\}$ such that $S_{p^n}(z)=\sigma^{m}S_{p^n}(x)=\sigma^{m}T_n(w)$, where in the last equality we have used Proposition \ref{prop:basicproperties} \eqref{item:MaxEqFactor}. Recall that $S_{p^n}(\cdot)$ is the skeleton map at scale $p^n$ and $m$ is nothing but the projection of $z$ onto the factor $\mathbb{Z}_{p^n}$. 

Let $z \in X \mapsto H_n(z)$ be the map that associates  the sequence in ${\mathcal A}^{\mathbb{Z}}$ of the consecutive symbols of $z$ 
in the holes of $S_{p_n}(z)$ from the coordinate $-m(n,z)$. 
That is, the unique sequence satisfying 
\begin{equation}
\label{eq:HN}
F_{S_{p^n} (x)} (H_n (z)) = \sigma^{-m(n,z)} (z) .
\end{equation}
We have that $H_n$ is a continuous function.
Let $k$ be an integer and $r$ such that $0\leq r < p^n$  and $k+m=lp^n + r$ for some  $l\in \mathbb{Z}$. 
It is straightforward to check that $m(n,\sigma^k (z))=r$. 
Then, using \eqref{eq:HN} and Proposition \ref{prop:basicproperties} \eqref{item:sigmapq}, one gets
\begin{equation} \label{PropertiesH}
H_n(\sigma^{k} z)= \sigma^{lq^n}H_{n}(z) .                                             
\end{equation}
We claim that $H_n(z)$ belongs to  $X$.
Since $S_{p^n}(z) = \sigma^mS_{p^n}(x)$, there is an integer sequence $(\ell_{i})_{i\ge 0}$ so that  $z$ can be written as $z=\lim_{i\to \infty} \sigma^{p^n \ell_i+m}(x)$.
Using formula \eqref{PropertiesH} we get, 
$$H_n(z)=H_n(\lim_{i\to \infty} \sigma^{p^n \ell_i+m}(x))=\lim_{i\to \infty}
\sigma^{q^n \ell_i} H_n(\sigma^{m}(x)).$$
But, from \eqref{eq:HN}, $H_n(\sigma^{m}(x))=H_n(x)$ and 
by Proposition \ref{prop:basicproperties} \eqref{item:fixpoint} $H_n(x)=x$, then 
$H_n(z)=\lim_{i\to \infty} \sigma^{q^n \ell_i}(x) \in X$. The claim is proved. 

Define $\varphi_{n}(z)=\sigma^{m}F_{S_{p^n}(x)}(\sigma(H_{n}(z)))$.  We have that this map $\varphi_n: X \to X$ is well defined by the previous claim and Proposition \ref{prop:basicproperties} \eqref{item:F_W}. This map leaves invariant each ${\mathcal A}$-letter in the $p^n$-skeleton of $z$ and shift the symbols of $z$ in the holes by one within the holes.

Using again formula \eqref{PropertiesH} it is not difficult to check that $\varphi_n$ is an automorphism of $(X,\sigma)$. Moreover, we have that $\varphi_n^{q^n}=\sigma^{p^n}$ because there are $q^n$ holes in the first $p^n$ letters of $S_{p^n}(x)=T_n(w)$.

On the other hand, $q^n$ is the minimum positive integer $\ell$ such that $\varphi_n^{\ell}\in \langle \sigma \rangle$. 
Indeed, if $\varphi_n^{\ell}=\sigma^{r}$ for some integer $r$, then $\sigma^{\ell p^n}=\varphi_n^{\ell q^n}=\sigma^{rq^n}$ and by aperiodicity, $\ell p^n=r q^n$. 
Since $p$ and $q$ are relatively  prime,  $ q^n$ divides $\ell$.

In particular, $\left \langle \{\varphi_n: n\in \mathbb{N} \}, \sigma \right \rangle/\langle \sigma \rangle$ is an infinite periodic group and thus it is not finitely generated (finitely generated torsion abelian groups are finite). This implies that ${\rm Aut}(X,\sigma)$ is not finitely generated. 
$\square$
\smallskip

We finish this section pointing out that if we choose $p$ and $q=p-2$ for a large odd value of $p$, then the associated $(p,q)$-Toeplitz subshift has a polynomial complexity of degree ${\frac{\log(p)}{\log(p)-\log(p-2)}\geq \frac{p-2}{2} \log(p)}$. We conclude that,

\begin{corollary}\label{cor:arbitrary}
	There exist Toeplitz subshifts of arbitrarily large polynomial complexity with a not finitely generated automorphism group.
\end{corollary}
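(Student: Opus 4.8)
The plan is to observe that the construction behind Theorem \ref{Thm:LowComplexityToeplitz} is completely insensitive to the ratio $q/p$: the only structural hypothesis used there is the coprimality $(p,q)=1$. Thus, instead of pushing $q$ small compared with $p$ (which drives the complexity exponent toward $1$), I would push $q$ close to $p$ in order to make the exponent arbitrarily large, while retaining verbatim the argument that the automorphism group is not finitely generated.

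Concretely, fix a large odd integer $p$ and set $q=p-2$. Since $p$ is odd, so is $p-2$, whence $(p,q)=\gcd(p,p-2)=\gcd(p,2)=1$, which is exactly the standing hypothesis of Section \ref{sec:pqtoeplitz}. I would then run the construction in the proof of Theorem \ref{Thm:LowComplexityToeplitz}: choose a word $w$ of length $p$ with $q=p-2$ holes whose first and last symbols lie in $\mathcal{A}$, generating a non-periodic $(p,q)$-Toeplitz sequence $x$ with orbit closure $X$. The automorphisms $\varphi_n$ defined there satisfy $\varphi_n^{q^n}=\sigma^{p^n}$, and $q^n$ is the least positive power of $\varphi_n$ lying in $\langle\sigma\rangle$; both facts rely only on $(p,q)=1$ and on aperiodicity, never on $q$ being small. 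Hence $\langle\{\varphi_n:n\in\mathbb{N}\},\sigma\rangle/\langle\sigma\rangle$ is again an infinite periodic group, so ${\rm Aut}(X,\sigma)$ is not finitely generated.

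It remains to estimate the complexity. By \eqref{eq:complexity} the complexity of $X$ is $\Theta(n^r)$ with exponent $r=1+\frac{\log q}{\log p-\log q}=\frac{\log p}{\log p-\log(p-2)}$. Using $\log p-\log(p-2)=\log\!\left(1+\tfrac{2}{p-2}\right)\le \tfrac{2}{p-2}$ gives $r\ge \frac{(p-2)\log p}{2}$, which tends to $+\infty$ as $p\to\infty$. Letting $p$ range over large odd integers therefore yields Toeplitz subshifts whose polynomial complexity degree is as large as desired, each with a non-finitely-generated automorphism group.

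There is no genuine obstacle here: the entire content is the recognition that the non-finitely-generated mechanism of Theorem \ref{Thm:LowComplexityToeplitz} does not depend on the complexity regime, so the same family of examples realizes both the near-linear case of that theorem and the arbitrarily-large-degree case of the corollary. The only items to verify are the elementary coprimality $(p,p-2)=1$ for odd $p$ and the one-line logarithmic estimate above.
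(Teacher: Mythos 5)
Your proposal is correct and coincides with the paper's own argument: the authors likewise take $q=p-2$ for large odd $p$, invoke the construction of Theorem \ref{Thm:LowComplexityToeplitz} (which only uses $(p,q)=1$) for the non-finitely-generated automorphism group, and obtain the complexity degree bound $\frac{\log p}{\log p-\log(p-2)}\geq \frac{p-2}{2}\log p \to \infty$. Your logarithmic estimate and the observation that the mechanism is insensitive to the ratio $q/p$ are exactly the content of the paper's one-line proof.
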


In the light of Corollary \ref{Cor:DichotomyRoots}, we notice that we can easily exhibit infinitely many roots of the shift. 
\begin{corollary}
	Let $(X,\sigma)$ be a $(p,q)$-Toeplitz, with $(p,q)=1$. Then for every $n\geq 1$, $\sigma$ admits a $q^n$-root.	
\end{corollary}

\begin{proof}
	We have that for every $n\geq 1$ there exists an automorphism $\varphi_n$ such that $\varphi_n^{q^n}=\sigma^{p^n}$. Since $(p,q)=1$ we can find $a,b\in \mathbb{Z}$ such that $ap^n=bq^n+1$. Then $\varphi_n^{aq^n}=\sigma^{ap^n}=\sigma^{bq^n}\sigma$ and the automorphism $\varphi_n^a\sigma^{-b}$ is a $q^n$-root of $\sigma$.    	
\end{proof}

\section{Realization of  finitely generated abelian groups}\label{sec:realisation}

In this section we show that within the class of Toeplitz subshifts we can realize any finitely generated abelian group with cyclic torsion subgroup as an automorphism group. Recall that the property of the torsion is necessary by Lemma \ref{lem:torsion}. In the process, we show that large entropy does not suffice to have a large automorphism group, by constructing Toeplitz subshifts with arbitrarily large entropy and no automorphisms other than powers of the shift. This result is a consequence of the following theorem.

\begin{theorem}\label{Thm:EntropyToeplitz2}
	For any infinite odometer there exists a uniquely ergodic Toeplitz subshift $(X, \sigma)$ with an arbitrarily large topological entropy whose associated odometer is equal to the given one and 
	${\rm Aut}(X,\sigma) = {\rm End}(X,\sigma) =\langle \sigma \rangle$.  
\end{theorem}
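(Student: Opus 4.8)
The plan is to exhibit, for a prescribed infinite odometer $\mathbb{Z}_{(p_n)}$, an explicit Toeplitz sequence $x$ over a finite alphabet whose periodic structure is exactly $(p_n)_{n\ge 1}$; then $(X,\sigma)=(\overline{\{\sigma^k x:k\in\mathbb{Z}\}},\sigma)$ is an almost one-to-one extension of $\mathbb{Z}_{(p_n)}$, which is therefore its maximal equicontinuous factor. The entire statement reduces to showing that the only endomorphisms are the powers of $\sigma$. Indeed, since an endomorphism of an equicontinuous system is an automorphism, Lemma \ref{lem:AutoExtensiona11} gives an injective map $\widehat\pi$ embedding ${\rm End}(X,\sigma)$ into the group of translations of $\mathbb{Z}_{(p_n)}$, with $\widehat\pi(\sigma)=+{\bf 1}$. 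Hence it suffices to prove $\widehat\pi({\rm End}(X,\sigma))=\langle{\bf 1}\rangle$: for any $\phi$ this yields $\widehat\pi(\phi)=k{\bf 1}=\widehat\pi(\sigma^k)$ for some $k\in\mathbb{Z}$, and injectivity forces $\phi=\sigma^k$, giving at once ${\rm End}(X,\sigma)={\rm Aut}(X,\sigma)=\langle\sigma\rangle$.

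For the construction I would work scale by scale, in the spirit of the maps $F_\bullet$ of Section \ref{sec:pqtoeplitz}. Passing from the $p_n$-periodic skeleton $S_{p_n}(x)$ to $S_{p_{n+1}}(x)$ I fill only part of the holes, retaining a positive density of holes at every scale (an irregular Toeplitz sequence). Within each period I reserve two kinds of positions: a bounded-length \emph{marker block}, positioned so that its occurrences pin down each $p_n$ as an essential period (forcing the associated odometer to be exactly $\mathbb{Z}_{(p_n)}$); and a \emph{free zone} of holes eventually filled with arbitrary content from a large auxiliary alphabet $\mathcal{A}_0$. If $d=\lim_n(\text{density of free positions per period})>0$, counting the $p_n$-periodic words gives $p_X(p_n)$ of order $p_n\,|\mathcal{A}_0|^{\,d p_n}$, so $h_{\mathrm{top}}(X,\sigma)$ is of order $d\log|\mathcal{A}_0|$, which is made arbitrarily large by enlarging $\mathcal{A}_0$. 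Filling the free zone with a generic point of a fixed uniquely ergodic sequence (or by a balanced enumeration) ensures that every word frequency of $x$ exists uniformly, which is exactly the condition needed for $(X,\sigma)$ to be uniquely ergodic.

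The delicate point is the rigidity, $\widehat\pi({\rm End}(X,\sigma))=\langle{\bf 1}\rangle$, and it is subtle precisely because the positive-density free zone demanded by the entropy requirement is a priori a reservoir of nontrivial endomorphisms that permute its content. Let $\phi\in{\rm End}(X,\sigma)$ have a block map of radius $\mathbf r$ and project to the translation by $z=(z_n)_n$. I would use the markers as anchors: fix $n$ with $p_n\gg\mathbf r$; since the marker block occurs only $p_n$-periodically and is recognizable from a bounded context, commutation of $\phi$ with $\sigma$ and finiteness of $\mathbf r$ should force $\phi$ to carry each marker occurrence of $x$ to a marker occurrence of $\phi(x)$, displaced by a single integer $k$ independent of the occurrence. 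Reading off this displacement identifies $z_n\equiv k\pmod{p_n}$ for all large $n$, whence $z=k{\bf 1}\in\langle{\bf 1}\rangle$. After composing with $\sigma^{-k}$ I may assume $\phi$ induces the identity on the factor; the marker anchors then fix the alignment of the free zones, and since $\phi$ has finite radius and commutes with $\sigma$ it must be the identity, giving $\phi=\sigma^k$.

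I expect the main obstacle to be exactly this last balancing act: making the marker hierarchy rigid enough that no endomorphism can reshuffle the high-entropy content of the free zones, while still leaving a positive density of genuinely free positions (for large entropy) and controlling the filling finely enough to keep all frequencies uniform (for unique ergodicity). Reconciling rigidity, positive hole density, and uniform frequencies in one explicit sequence is the crux; by comparison the algebraic reduction through $\widehat\pi$ and the entropy estimate are routine.
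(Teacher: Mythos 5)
Your global frame coincides with the paper's: reduce everything, via Lemma \ref{lem:AutoExtensiona11} (applied to endomorphisms), to showing $\widehat{\pi}({\rm End}(X,\sigma))=\langle {\bf 1}\rangle$, and build the Toeplitz point from a marker hierarchy plus positive-density ``free zones'' that carry the entropy. But there is a genuine gap exactly at the point you yourself flag as the crux: the rigidity step is asserted (``should force $\phi$ to carry each marker occurrence to a marker occurrence''), not proved, and the underlying heuristic is false in general. The $(p,q)$-Toeplitz subshifts of Section \ref{sec:pqtoeplitz} are a counterexample to it: they have a perfectly periodic, boundedly recognizable marker skeleton at every scale, yet the hole-shifting maps $\varphi_n$ built from $H_n$ and $F_{S_{p^n}(x)}$ are automorphisms projecting outside $\langle {\bf 1}\rangle$, and the automorphism group is not even finitely generated. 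So ``finite radius $+$ commutation with $\sigma$ $+$ recognizable markers'' cannot by itself pin $\widehat{\pi}(\phi)$ to $\langle {\bf 1}\rangle$; one must make the hole \emph{content} rigid. The paper does this through conditions $C1)$ and $C2)$: the distinguished block $B_{1,n}$ occurs in $x$ exactly along $p_{i_{n+1}}\mathbb{Z}$ and is extensible; Claim \ref{claim:overlapping} (trivial overlapping) makes it recognizable; and --- the step your sketch has no substitute for --- the full variability of the balanced concatenations in $C2)$ forces the $p_{i_{n+1}}$-periodic image word $w$ to have its phase locked to the $C1)$-ordered region (Claim \ref{lem:combinatoire2}), which is what actually transports marker occurrences of $x$ to marker occurrences of $\phi(x)$. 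An induction across scales then makes $x$ and $\sigma^{k}\phi(x)$ start with $B_{1,n}$ for all $n$, hence right asymptotic, and $\phi$ is a power of the shift.

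There is a secondary, related tension in your proposal among the three requirements. Your entropy lower bound $p_X(p_n)$ of order $p_n\,|\mathcal{A}_0|^{d p_n}$ presumes that essentially all fillings of the free zones occur in $X$; your unique-ergodicity device (fill with a single generic point of a uniquely ergodic sequence) restricts the fillings; and your rigidity needs them restricted further still, since any automorphism of the filling subshift threatens to lift to an endomorphism of $X$ through the hole-reading mechanism of Section \ref{sec:pqtoeplitz}. Nothing in the proposal reconciles these. The paper resolves all three demands with a single device: fill with \emph{all} concatenations in which each $B_{i,n-1}$, $i\geq 2$, occurs exactly $d_{i,n-1}$ times and $B_{1,n-1}$ appears only in the $C1)$-marker. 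The count $f(d,k)=(dk)!/(d!)^{k}$ of such balanced words yields the entropy (made arbitrarily large via $k_1$); the exact occurrence counts yield unique ergodicity by the ergodic theorem; and the ``all balanced words occur, with an asymmetric role for $B_{1,n}$'' structure is precisely what powers Claim \ref{lem:combinatoire2}. Without an analogue of this balanced-variability mechanism, your argument does not close.
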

\begin{proof}
	Fix an odometer $\mathbb{Z}_{(p_{n})}$ not isomorphic to a finite group. In this case we can consider $(p_{n})_{n\geq 1}$ strictly increasing. Below we construct a Toeplitz point $x$ and its associated subshift $X$ in an iterative process and prove all the desired properties. Without loss of generality we may assume that $p_{1} =1$.
	
	\noindent {\it The Toeplitz subshift.} Let $D_{0}>1$ and $k_{1}>3$ be  constants that will be adjusted later.  In the following, we will assume that $k_{1}$ is large enough so that $2^{n-2} k_{1} > (n+1)^2D_{0}$ for all integer $n \ge 2$. 
	We consider an alphabet  $\mathcal{A}$ with $k_{1}$ letters   and let $i_{1}=1$.
	
	Fix $n\geq 2$ and suppose that at step $(n-1)$ we have defined $k_{n-1} \ge 2^{n-2} k_{1}$ words 
	$B_{1,n-1},\ldots,B_{k_{n-1},n-1}$ of the same length $p_{i_{n-1}}$ on  $\mathcal{A}$. 
	Pick a positive integer $i_{n}>i_{n-1}$ such that $p_{i_{n}} >  p_{i_{n-1}}  3k_{n-1} ( (n^2 D_{0})^{-1}  - k_{n-1}^{-1})^{-1}$ (the definition of $k_{1}$ ensures
	that the term $(n^2 D_{0})^{-1}  - k_{n-1}^{-1}$ is positive).
	Next, we build words of length $p_{i_{n}}$  by concatenating the words  $B_{1,n-1},\ldots,B_{k_{n-1},n-1}$ according to the following rules:
	\smallskip
	
	\noindent $C1)$  The  words  $B_{1,n-1}\ldots B_{\lfloor k_{n-1}/2 \rfloor,n-1}$ and
	$B_{\lfloor k_{n-1}/2 \rfloor +1 ,n-1} \ldots B_{k_{n-1},n-1}$ respectively always appear as prefix and suffix of all words of step $n$. 
	\smallskip
	
	\noindent $C2)$ After ensuring $C1)$, we complete the remaining positions with $p_{i_{n}}/p_{i_{n-1}}-k_{n-1}$%  \ge k_{n-1}(3 ( ((n+1)^2 D_{0})^{-1}  - (2^{n-2}k_{1})^{-1})^{-1}-1)  $}  
	words of length $p_{i_{n-1}}$ of the previous step. To do so, we consider all different concatenations of $B_{2,n-1}, \ldots,$ $B_{k_{n-1},n-1}$ (we exclude $B_{1,n-1}$) such that each  $B_{i,n-1}$ appears the same number of times $d_{i,n-1}$, for every $i \ge 2$. 
	More precisely, set $d_{n}=d_{i,n-1}=\lfloor({p_{i_{n}}}/ {p_{i_{n-1}}}  - k_{n-1})/ (k_{n-1}-1) \rfloor$
	for $i\ge 3$ (so it is constant for such $i$) and $\hat d_{n}=d_{2,n-1} = p_{i_{n}}/p_{i_{n-1}} -k_{n-1} -(k_{n-1}-2)d_{n}$. 
	Clearly, $\hat d_{n} \geq d_{n}$. For any such concatenation we obtain a word of step $n$ of length $p_{i_n}$ and we let $k_{n}$ denote the number of words of this length we get. We have the bound  $k_{n} \ge 
	f(d_{n},k_{n-1}-1)$, where 
	$f(d,k) = \frac{(dk)!}{(d !)^k}$ denotes the number of partitions of a set of cardinality $d k$ in $k$ atoms such that each atom has  $d$ elements. 
	Observe that $f(d,k) \ge k!$ and so $k_{n} \ge   2 k_{n-1} \ge 2^{n-1} k_{1}$ for each $n \ge 2$. 
	We choose an order $B_{1,n}, \ldots, B_{k_{n},n}$ of them and continue with step $n+1$. 
	
	By construction, there exists a sequence $x$ in ${\mathcal A}^\mathbb{Z}$ such that   
	for every $n\geq 1$ the word around coordinate 0 is equal to $B_{k_n,n}.B_{1,n}$, where the dot indicates the position to the left of coordinate 0 in $x$. It is not difficult to see that for every $n\ge 1$ the sequence $x$ is a concatenation of words $B_{\cdot, n}$. Moreover, any finite word in $x$ appears periodically with period $p_{i_n}$ for some $n\geq 1$. Indeed, by construction, any finite word in $x$ is a subword of  
	$B_{k_n,n}.B_{1,n}$ for some $n\geq 1$, which by $C1)$ is $p_{i_{n+1}}$-periodic. This implies that $x$ is a Toeplitz sequence. Call $(X,\sigma )$ the associated Toeplitz subshift. 
	
	We will need the following fundamental claim. We say  a word $u$ has a {\em trivial overlapping} with  the word $v$ whenever $u$ appears in $v$ only as a prefix or a suffix, meaning  
	if $v= pus$ for some words $p,s$ then $p$ or $s$ has to be the empty word. 
	
	\begin{claim}\label{claim:overlapping}
		For every $n\geq 1$, each word $B_{i,n}$  has a trivial overlapping with $B_{j,n}B_{k,n}$ for any $i,j,k \in \{1,\ldots,k_n\}$.
		
	\end{claim}
	\begin{proof}
		
		We proceed by induction. The case $n=1$ is true by construction. Now assume the result holds for $n$ and by contradiction assume that $B_{i,n+1}$ is a subword (different from a prefix or a suffix) of $B_{j,n+1}B_{k,n+1}$ for some  $i,j,k \in \{1,\ldots,k_{n+1}\}$.  
		We have that in the word $B_{j,n+1}B_{k,n+1}$, $B_{1,n}$ only appears as prefix of $B_{j,n+1}$ and prefix of $B_{k,n+1}$, otherwise $B_{1,n}$  would be a subword of $B_{j',n}B_{k',n}$ for some $j',k' \in \{2,\ldots,k_n\}$, contradicting the induction hypothesis. But $B_{i,n+1}$ also starts with $B_{1,n}$ so the only possibility is $B_{j,n+1}B_{k,n+1}=B_{i,n+1}B_{k,n+1}$ or $B_{j,n+1}B_{k,n+1}=B_{j,n+1}B_{i,n+1}$, which contradicts our assumption.   	
	\end{proof}
	
	Before giving the rest of the proof, let us fix some notations. For a word $y$ (eventually infinite), $i<j \in \mathbb{Z}$, $y|_{[i,j)}$ denotes the word $y_{i}\cdots y_{j-1}$. 
	We say that  a word $u$ {\em occurs} in $y$ if there exists some $i\in \mathbb{Z}$ such that $y|_{[i, i+|u|)} =u$, where $|u|$ denotes the length of $u$ and the index $i$ is called an {\em occurrence} of $u$.

	Now we check that $X$ is an almost one-to-one extension of the odometer $\mathbb{Z}_{(p_{i_n})}$. 
	For this, it is enough to verify that $(p_{i_n})_{n\geq 1}$ is a periodic structure of $x = (x_{n})_{n}$, and for this it is only left to check that each $p_{i_n}$ is an essential period of $x$. If not, there exists $n \geq 1$ and $1 \leq p <p_{i_n}$ such that $Per_p(x) = Per_{p_{i_n}}(x)$. 
	By condition  $C1)$, $\{0,1,  \ldots, p_{i_{n-1}} -1 \} \subseteq   Per_{p_{i_n}}(x)$, so  $p$ is an occurrence of  $x|_{[0,p_{i_{n-1}})} =  B_{1,n-1}$ in $x$.  Since $B_{1,n}$ is a concatenation of words $B_{\cdot, n-1}$, Claim \ref{claim:overlapping} and condition C2), both  imply that $B_{1,n-1}$ has no  occurrence, except $0$,   in  $B_{1,n} = x|_{[0,p_{i_{n}})}$. This contradicts the fact that $1 \leq p <p_{i_n}$.

	\medskip
	
	\noindent {\it Lower bound for the topological entropy.}
	We use the following lower bound of the topological entropy 
	$\displaystyle h(X,\sigma)= \lim_{n\to \infty }\log p_X(n)/n \ge  \limsup_{n\to \infty} \frac{\log k_{n}}{p_{i_{n}}}$.
	A standard computation using the Stirling formula gives that $$ \displaystyle \log f(d,k)   \geq   (d-1)(k-1) \log (k+1)$$ for every $d$ and $k$ larger than some universal constant. 
	Since we have $d_{n} \ge 3 ( ((n+1)^2 D_{0})^{-1}  - (2^{n-2}k_{1})^{-1})^{-1}-2$ and $k_{n} \ge 2^{n-1}k_{1}$, in what follows we can assume that $k_1$ and $D_0$ are large enough so that $d_{n}$ and $k_{n}$ satisfy the previous estimates for all $n \ge 1$. An iterated use of the  previous inequality leads to:

	\begin{align*} \label{eq.entropie}
	\frac{\log k_{n}}{p_{i_{n}}} & \ge \frac{\log f(d_{n}, k_{n-1}-1)}{p_{i_{n}}} \\
	& \ge   \frac{d_{n}-1}{p_{i_{n}}}  (k_{n-1}-2) \log k_{n-1}   =    \frac{p_{i_{n-1}}}{p_{i_{n}}}  (d_{n}-1) (k_{n-1}-2) \frac{\log k_{n-1}}{p_{i_{n-1}}} \\
	& \ge  \frac{p_{i_{n-1}}}{p_{i_{n}}}  (d_{n}-1)(k_{n-1}-2) \cdots \frac{p_{i_{{1}}}}{p_{i_{2 }}}  (d_{2} -1) (k_{{1}}-2) \frac{\log k_{{1}}}{p_{i_{{1}}}}.
	\end{align*}
	Now, a standard  computation gives that for all $n \geq 2$
	
	$$  \left |\frac{p_{i_{n-1}}}{p_{i_{n}}}  (d_{n}-1)(k_{n-1}-2) -1 \right|  \le\frac 1{k_{n-1}-1}+ \frac{p_{i_{n-1}}}{p_{i_{n}}}3k_{n-1}  \le \frac 1{n^{2} D_{0}}.$$

	Let  $C>0$ and $0< r <1$ be constants such that $z+1\geq \exp(-C|z|)$ when $|z| <r$. 
	Then, if $D_{0}$ is large enough, we can take 
	$z=\frac{p_{i_{n-1}}}{p_{i_{n}}}  (d_{n}-1)(k_{n-1}-2) -1$ to get
	$$\prod_{n\geq 2} \frac{p_{i_{n-1}}}{p_{i_{n}}}  (d_{n}-1)(k_{n-1}-2)  \ge {\rm exp }( -C\sum_{n\geq 2} \frac 1{n^{2}D_{0}}).$$ 
	It follows that 
	$h(X,\sigma) \ge  {\rm exp }( -C\sum_{n\geq 2} \frac 1{n^{2}D_{0}})\frac{\log k_{{1}}}{p_{i_{{1}}}}$. Hence, we can make the entropy arbitrarily large by moving $k_{1}$. 
	
	\medskip
	
	\noindent {\it Unique ergodicity.}
	Condition C2) and Claim \ref{claim:overlapping} both  impose  that for every $n\geq 1$ and $i\in \{1,\ldots,k_n\}$ the set of occurrences of the  word $B_{i,n}$ in $x$ has a specific frequency. More precisely, the word $B_{i,n}$ appears exactly a specific number of time in $B_{j,n+1}$ for every $j  \in \{1,\ldots,  k_{n+1} \}$ (Namely one time if $i=1$ and $d_{i,n}+1$ times otherwise). Since any $x\in X$ is a concatenation of words $B_{\cdot, n+1}$ (Condition C2)), it follows that  the average number of occurrences of $B_{i,n}$,     $\frac{1}{N}\sum_{k=0}^{N-1} 1_{[B_{i,n}]_{0}}(\sigma^k x)$ converges, as $N$ goes to infinity,  to $(1+d_{i,n})/|B_{1,n+1}| = (1+d_{i,n})/p_{i_{n+1}}$ (with the convention $d_{1,n} = 0$). Hence for every ergodic measure $\mu$ we have that $\mu([B_{i,n}]_0)=(1+d_{i,n})/p_{i_{n+1}}$  by the Ergodic Theorem.  Since the cylinders $[B_{i,n}]_0$  (and their images under the powers of the shift) generate the Borel $\sigma$-algebra, the associated subshift is uniquely ergodic. 	
	\medskip
	
	\noindent {\it Automorphism group.}
	We now prove that $(X,\sigma)$ has no other endomorphisms than the powers of $\sigma$. Let $\phi\in {\rm End}(X,\sigma)$ and consider $n\geq 1$ large enough such that 
	$p_{i_{n-1}}$ is greater than the radius of a block map for $\phi$.
	
	By Claim \ref{claim:overlapping} and condition C2), any occurrence  $i \in \mathbb{Z}$  of $B_{1,n}$ is an occurrence of some $B_{j, n+1}$ in $x$. Since $x$ is a concatenation of words $B_{\cdot, n+1}$, the index $i- (k_{n}- \lfloor k_{n}/2 \lfloor )p_{i_{n}} -p_{i_{n-1}}$ is an occurrence of the word 
	$$B_{k_{n-1}, n-1 }B_{\lfloor k_{n}/2 \rfloor +1, n }\ldots B_{k_{n},n} \ B_{1,n} \ B_{2,n} \ldots B_{\lfloor k_{n}/2 \rfloor, n }B_{1,n-1},$$
	meaning that the word $B_{1,n}$ is always preceded in $x$ by $B_{k_{n-1}, n-1 }B_{\lfloor k_{n}/2 \rfloor +1, n }\ldots B_{k_{n},n}$ and followed by the word $B_{2,n} \ldots B_{\lfloor k_{n}/2 \rfloor, n }B_{1,n-1}$. This phenomenon is usually referred as $B_{1,n}$ is an {\em extensible word}.  
	
	Therefore, there exists a word $w$ of length $k_{n}p_{i_{n}}$ such that
	\begin{equation*}\label{eq:extunique}
	\{ i \in \mathbb{Z} ; x|_{[i, i+p_{i_{n}})} =  B_{1,n}  \} \subseteq  \{ i \in \mathbb{Z}; \phi(x)|_{[i-(k_{n} - \lfloor k_{n}/2 \rfloor)p_{i_{n}}, i+\lfloor k_{n}/2 \rfloor p_{i_{n}})} = w\}.
	\end{equation*}
	
	By construction, the set in  the left hand side of the equation is $p_{i_{n+1}}\mathbb{Z}$, so the word $w$ occurs periodically with period $p_{i_{n+1}}$ in $\phi(x)$.  
	
	Recall that $\phi(x) $ is a  concatenation of the words $B_{i,n+1}$  of length $p_{i_{n+1}}$ and, in such decomposition, condition $C2)$ ensures that each word $B_{i,n+1}B_{j,n+1}$, $i,j \in \{2,\ldots,k_{n+1}\}$ appears at least once.
	From the periodicity of  occurrences of $w$ there exists $0 \le p \le p_{i_{n+1}}$ such that for every $i,j\in \{2, \ldots, k_{n+1}\}$,
	\begin{eqnarray}\label{eq:remarquable}
	B_{i,n+1}B_{j,n+1}|_{[p, p +k_{n}p_{i_{n}})} = w.   
	\end{eqnarray}
	
	The following claim shows that this $p$ has to be close to $p_{i_{n+1}}$, that is, $w$ starts with a suffix of $B_{i,n+1}$ and ends with a prefix of $B_{j,n+1}$. 
	
	\begin{claim}\label{lem:combinatoire2}
		Let $p$ be an  integer $0 \le p \le p_{i_{n+1}}$ that  satisfies \eqref{eq:remarquable}. 
		Then, $$\left | p+k_{n}p_{i_{n}}  -p_{i_{n+1}}  - \lfloor k_{n}/2 \rfloor  p_{i_{n}} -1 \right | \le p_{i_{n-1}}(\lfloor k_{n-1}/2 \rfloor +1).$$
	\end{claim}
	\begin{proof}
		Assume the inequality is false. This implies that the word $w =  B_{i,n+1}B_{j,n+1}|_{[p, p+k_{n}p_{i_{n}})}$ has a suffix (or a prefix) with  a factor $B_{\cdot, n-1}$   of $B_{i,n+1}$ (or of $B_{j,n+1}$)  different from the  ordered part  given by condition $C1)$. Since condition $C2)$ ensures that we can find any word $B_{\ell, n-1}$, $\ell \ge 2$,  at any position in $p_{i_{n}} \mathbb{Z} \cap \{p_{i_{n} \lfloor  k_{n} /2\rfloor }, \dots, p_{i_{n+1}}- (k_{n}- \lfloor  k_{n} /2\rfloor)p_{i_{n}}\}$ outside the ordered parts given by condition $C1)$, there exist indices $i',j' \in  \{2, \ldots , k_{n+1}\}$ such that 
		$ B_{i',n+1}B_{j',n+1}|_{[p, p+k_{n}p_{i_{n}})} \neq w$, leading to a contradiction 
	\end{proof}
	
	Call the  word  $B_{i,n+1}B_{j,n+1}|_{[p+ (k_{n}-\rfloor k_{n}/2 \lfloor) p_{i_{n}} -1, p+ (k_{n}-\rfloor k_{n}/2 \lfloor +1) p_{i_{n}}) }$ the {\em  middle word}  of $w$. 
	For any occurrence $\ell$  of $B_{1,n}$ in $x$,  the  unique extension property implies that $\ell$  also is an occurrence of the middle word  of $w$ in $\phi(x)$. Thus, by  Claim \eqref{lem:combinatoire2}, $\ell$ is at distance at most $p_{i_{n-1}}(\lfloor k_{n-1}/2 \rfloor +1)$ from an occurrence of $B_{1,n}$ in $\phi(x)$. 
	Finally, we get $k \in \mathbb{Z}$ with $|k| \le p_{i_{n-1}}(\lfloor k_{n-1}/2 \rfloor +1)$ such that both $x$ and $\sigma^k \phi(x)$ starts with $B_{1,n}$.
	Considering $\sigma^k \phi$ instead of $\phi$,  we can assume that $x$ and $\phi(x)$ start with the word $B_{1,n}$, where $\phi$ admits a block map $\tilde{\phi}$ of radius smaller than 
	$p_{i_{n-1}} + p_{i_{n-1}}(\lfloor k_{n-1}/2 \rfloor +1) \le p_{i_{n}}$.
	
	By the extensible property of $B_{1,n}$ we get that $\phi([B_{1,n}]_0) \subseteq [B_{1,n}]_0$, where $[B_{1,n}]_0$ is the cylinder set starting with word $B_{1,n}$ at zero coordinate. Using that $B_{1,n}$ only appears as a prefix of the words $B_{j,n+1}$ for all $j\in \{1\ldots,k_{n+1}\}$, we get that $\phi(x)$ has to start with some word $B_{j,n+1}$. Since the radius of $\tilde{\phi}$ is lower than $p_{i_{n}}$, the same argument as before shows that there exists another $k\in \mathbb{Z}$ with $|k| \le p_{i_{n}}(\lfloor k_{n}/2 \rfloor +1)< p_{i_{n+1}}$ such that both $x$ and $\sigma^k \phi(x)$ starts with $B_{1,n+1}$. But since $\phi(x)$ starts with some  $B_{j,n+1}$ we have that if $j\neq 1$ then $k\geq p_{i_{n+1}}$. So we get that $j=1$ and $k=0$. Inductively we conclude that $x$ and $\phi(x)$ start with the word $B_{1,n}$ for every $n\geq 1$. Hence, $x$ and $\phi(x)$ are right asymptotic. Since  any word of $x$ has a positive occurrence in $x$, the block map $\tilde{\phi}$ codes the identity map, so $\phi = {\rm Id}$. We conclude that  ${\rm End}(X,\sigma)={\rm Aut}(X,\sigma)=\langle \sigma \rangle$, finishing the proof. 
\end{proof}

In the previous proof we can relax condition $C2)$ by considering all the concatenations of words $B_{i,n-1}$, $i \in \{2, \ldots, k_{n-1}\}$, without imposing restrictions on their number of occurrences, leading to the construction of a coalescent Toeplitz subshift with entropy arbitrarily high but not necessarily uniquely ergodic.  

The fact that we can choose first an infinite odometer and then find a Toeplitz almost one-to-one extension of it with arbitrarily high entropy allow us to deduce  the following.

\begin{theorem}\label{theo:anycountable}
	Any infinite finitely generated abelian group with cyclic torsion subgroup can be realized as the automorphism group of a coalescent Toeplitz subshift with arbitrarily large or zero entropy.
\end{theorem}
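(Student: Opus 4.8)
The plan is to apply the structure theorem for finitely generated abelian groups to write $G \cong \mathbb{Z}^d \oplus \mathbb{Z}_N$ with $d \ge 1$ (since $G$ is infinite) and $N \ge 1$ (its cyclic torsion subgroup), and then to realize $G$ as the automorphism group of a Cartesian product of Toeplitz subshifts, combining Theorem \ref{Thm:EntropyToeplitz2} with Corollary \ref{Corollary:AutoProductToeplitz}. Concretely, I would fix $d$ distinct primes $p_1, \dots, p_d$, none dividing $N$ (and at least $3$ in the zero entropy case, to respect $q_n \ge 3$), and for each $i$ produce a coalescent Toeplitz subshift $(X_i, \sigma)$ with associated odometer $\mathbb{Z}_{(p_i^n)}$ and ${\rm Aut}(X_i,\sigma) \cong \mathbb{Z}$. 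In the large entropy regime this is exactly the output of Theorem \ref{Thm:EntropyToeplitz2}, which in addition gives ${\rm Aut}(X_i,\sigma) = \langle \sigma \rangle$ and arbitrarily large entropy. In the zero entropy regime I would instead use the explicit non-superlinear construction of Section \ref{Sec:ToeplitzSubquadratic} with the constant choice $q_n = p_i$ forcing the odometer to be $\mathbb{Z}_{(p_i^n)}$; since this odometer is torsion free by Lemma \ref{lem:torsion}, Theorem \ref{Thm:RootsToeplitz}\eqref{item:NonSuperlinear} yields ${\rm Aut}(X_i,\sigma) \cong \mathbb{Z}$. The torsion is supplied by the finite rotation $(\mathbb{Z}_N, +\mathbf{1})$, which is a periodic Toeplitz subshift with ${\rm Aut}(\mathbb{Z}_N, +\mathbf{1}) = {\rm End}(\mathbb{Z}_N, +\mathbf{1}) = \mathbb{Z}_N$.

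The system I would then consider is $X = X_1 \times \cdots \times X_d \times \mathbb{Z}_N$ with the diagonal shift. To apply Corollary \ref{Corollary:AutoProductToeplitz} --- which yields both ${\rm Aut}(X,\sigma) = {\rm Aut}(X_1,\sigma)\oplus\cdots\oplus{\rm Aut}(X_d,\sigma) \oplus \mathbb{Z}_N \cong \mathbb{Z}^d \oplus \mathbb{Z}_N = G$ and coalescence of $X$ --- the only thing to verify is that the factors are pairwise disjoint. I expect this to be the main obstacle, and I would isolate it as the statement: \emph{two Toeplitz subshifts whose associated odometers have coprime supernatural numbers are disjoint.}

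To establish this, let $(X, \sigma)$ and $(Y, \sigma)$ be Toeplitz with almost one-to-one factor maps $\pi_X\colon X \to A$ and $\pi_Y \colon Y \to B$ onto coprime odometers $A = \mathbb{Z}_{(p_n)}$ and $B = \mathbb{Z}_{(q_n)}$. First, coprimality makes the product odometer $C = A \times B$ a minimal odometer: by the Chinese remainder theorem the diagonal element $(\mathbf{1}, \mathbf{1})$ projects to a generator of each $\mathbb{Z}_{p_n} \times \mathbb{Z}_{q_n} \cong \mathbb{Z}_{p_n q_n}$, so the $\mathbb{Z}$-action it generates is dense, in the spirit of Lemma \ref{lem:generalizedminimality}. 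Next, $\Pi = \pi_X \times \pi_Y \colon X \times Y \to C$ is almost one-to-one, since a base point $(a,b)$ both of whose coordinates have singleton $\pi_X$- and $\pi_Y$-fibers has the singleton $\Pi$-fiber $\{x_0\}\times\{y_0\}$. Because two minimal systems are disjoint exactly when their product is minimal, it suffices to prove $X \times Y$ minimal; as every minimal subset of an almost one-to-one extension must contain the unique preimage $(x_0,y_0)$ of such a base point, it is enough to show that the orbit of $(x_0, y_0)$ is dense. For this I would fix a target $(x', y')$ whose coordinates have singleton fibers over some $a', b'$ (such points are dense), use density of the $C$-orbit of $(\mathbf{1},\mathbf{1})$ to choose integers $k_i$ with $k_i(\mathbf{1},\mathbf{1}) + \Pi(x_0,y_0) \to (a',b')$, and then invoke the almost one-to-one property of $\Pi$: every limit point of $\sigma^{k_i}(x_0,y_0)$ lies in $\Pi^{-1}(a',b') = \{(x',y')\}$, so $\sigma^{k_i}(x_0,y_0) \to (x',y')$. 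Hence the orbit is dense, $X \times Y$ is minimal, and the two factors are disjoint.

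With this disjointness in hand, the distinct-prime choice makes all $d+1$ odometers $\mathbb{Z}_{(p_1^n)}, \dots, \mathbb{Z}_{(p_d^n)}, \mathbb{Z}_N$ pairwise coprime, so an iterated application of Corollary \ref{Corollary:AutoProductToeplitz} --- the partial product $X_1 \times \cdots \times X_k$ is again a Toeplitz subshift with odometer $\mathbb{Z}_{(\prod_{i \le k} p_i^n)}$, hence disjoint from the coprime factor $X_{k+1}$ --- shows that $X$ is a coalescent Toeplitz subshift with ${\rm Aut}(X,\sigma) \cong G$. Since the entropy of a product is the sum of the entropies, the construction has arbitrarily large entropy when the $X_i$ are taken from Theorem \ref{Thm:EntropyToeplitz2} and zero (indeed non-superlinear) entropy when they are taken from the construction of Section \ref{Sec:ToeplitzSubquadratic}, which covers both regimes and completes the proof.
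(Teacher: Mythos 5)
Your proposal is correct and follows the same skeleton as the paper's proof: decompose $G \cong \mathbb{Z}^d \oplus \mathbb{Z}_N$, realize each $\mathbb{Z}$ summand by a Toeplitz subshift over an odometer $\mathbb{Z}_{(p_i^n)}$ with distinct primes not dividing $N$ (Theorem \ref{Thm:EntropyToeplitz2} in the large entropy regime, the non-superlinear construction of Section \ref{Sec:ToeplitzSubquadratic} with $q_n = p_i$ constant in the zero entropy regime), adjoin the finite rotation $(\mathbb{Z}_N, +\mathbf{1})$ for the torsion, and conclude via Corollary \ref{Corollary:AutoProductToeplitz}. Where you genuinely diverge is the disjointness step: the paper simply asserts minimality of the product from the coprimality of the odometer bases and refers to Section 12 of \cite{Downarowicz:2005}, whereas you prove it from scratch --- minimality of the product odometer by the Chinese remainder theorem (in the spirit of Lemma \ref{lem:generalizedminimality}), the product of the almost one-to-one factor maps being almost one-to-one, the observation that every minimal subset must contain the unique preimage of the base point, and orbit density forced by limit points landing in singleton fibers --- together with the standard equivalence that minimal systems are disjoint iff their product is minimal. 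This argument is correct and makes the proof self-contained where the paper relies on a citation; you also make explicit two points the paper treats only implicitly, namely the iterated application of Corollary \ref{Corollary:AutoProductToeplitz} (partial products are Toeplitz with odometer $\mathbb{Z}_{((p_1 \cdots p_k)^n)}$, hence coprime to the next factor) and the additivity of entropy, and your requirement that the zero-entropy primes be at least $3$ matches the constraint $q_n \geq 3$ that the paper's choice ``$q_n = p$ not dividing $N$'' leaves tacit. Your appeal to Lemma \ref{lem:torsion} plus Theorem \ref{Thm:RootsToeplitz}\eqref{item:NonSuperlinear} for ${\rm Aut}(X_i,\sigma)\cong\mathbb{Z}$ in the zero-entropy case is exactly the paper's route (coalescence of those factors coming from subquadratic complexity). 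The only detail worth a sentence more in your disjointness argument is the density in $X$ (not just in the base) of points with singleton fibers: it holds because the set of such points is nonempty and invariant, hence dense by minimality --- a fact the paper itself uses in the proof of minimality of $\sigma^{q^n}$.
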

\begin{proof}
	Let $G$ be a finitely generated abelian group with cyclic torsion subgroup.
	By the fundamental theorem of finitely generated abelian groups, we may identify $G$ with 
	$\mathbb{Z}^{d}\oplus \mathbb{Z}_{a}$, where $d\geq 1$ and $a=a_1^{s_1}\cdots a_{\ell}^{s_\ell}$ with different prime numbers $a_i$ for $i\in \{1,\ldots,\ell\}$ and $\ell\geq 1$. 
	
	Consider $d$ different primes $r_1,\ldots,r_d$, where each $r_j$ is also chosen to be different from the $a_i$'s for $i\in \{1,\ldots,\ell\}$. 
	Let us consider for each $j\in \{1,\ldots,d\}$  a Toeplitz subshift $(X_j,\sigma)$ such that
	${\rm End}(X_j,\sigma)={\rm Aut}(X_j,\sigma)=\langle \sigma \rangle$, so is isomorphic to $\mathbb{Z}$, and whose associated odometer is $\mathbb{Z}_{(r_j^n)}$, {\em i.e.}, where the scale is given by $(r_j^n)_{n\geq 1}$.   
	Examples of such subshifts are provided by Theorem \ref{Thm:EntropyToeplitz2}, in the positive entropy case and by the examples illustrating  Theorem \ref{Thm:RootsToeplitz} \eqref{item:NonSuperlinear} in the zero entropy case.

	Consider the product space $X=X_1\times \cdots \times  X_d \times \mathbb{Z}_{a}$ with the action $\sigma\colon X\to X$  given by the shift on each $X_j$ for $j\in \{1,\ldots,d\}$ and by the addition by one on the finite system $\mathbb{Z}_{a}$ (recall that in the case a Toeplitz is finite we identify it with the associated odometer). The fact that all the involved odometers (finite and infinite) have different primes in their bases implies that the system $(X,\sigma)$ is a minimal system (we refer to Section 12 in \cite{Downarowicz:2005} for a deeper discussion on disjointness properties of Toeplitz subshifts). Moreover, $(X,\sigma)$ is also a Toeplitz subshift: the $k$-th coordinate of $x=(x_1,\ldots,x_d,m) \in X$ is periodic of period the product of the periods of the $k$-th coordinates of $x_1,\ldots, x_d, m$.  
	By Corollary $\ref{Lemma:AutoProduct}$ we get that $(X,\sigma)$ is coalescent and its automorphism group is abelian and equal to ${\rm Aut}(X_1,\sigma)\oplus\cdots \oplus {\rm Aut}(X_d,\sigma) \oplus {\rm Aut}(\mathbb{Z}_{a},+1)$,
	which is nothing but $\mathbb{Z}^{d} \oplus \mathbb{Z}_{a}$, and thus is isomorphic to $G$.    	
\end{proof}
Finally remark that any finite cyclic group  can be realized as the automorphism  group of a periodic Toeplitz subshift. So to summarize,  any finitely generated abelian group with cyclic torsion subgroup can be realized as the automorphism  group of a Toeplitz subshift.

We finish this section with some open questions: Given a countable subgroup of an odometer $\mathbb{Z}_{(p_n)}$, can we find a Toeplitz subshift whose automorphism group realizes this group?  In particular, does there exist a Toeplitz subshift whose automorphism group contains an infinite countable direct sum of $\mathbb{Z}$?

%\newpage %% AUTHOR: please comment out this line.  It serves only
%%   to demonstrate both types of header line in daj-template.pdf

%%% AUTHOR: optional appendix here
%\appendix %% you may comment this out if no Appendix
%\section*{Appendix}
%\section{Improving the constants}
%Material is placed here as needed.

%%% AUTHOR: optional acknowledgments here
%\section*{Acknowledgments} %%  you may comment this out if no Ackno
%The authors are grateful to the anonymous reviewers for finding
%a bug in the main result.

%%% AUTHOR:
%%% Bibliography goes here. Note that the arXiv cannot process bibtex
%%% or biber bibliographies.  Example of acceptable bibliograpy format:
\bibliographystyle{amsplain}

%% AUTHOR: You can generate such a bibliography from a .bib file by 
%% running pdflatex/bibtex/pdflatex/pdflatex and then pasting the .bbl file
%% between \begin{thebibliography} and \end{bibliography}

%%% AUTHOR: Include a short description of each author following the
%%% structure below. Use the same short tags used previously.  
%%% Use \imageat{} and \imagedot{} instead of "@" and "." in
%%% email addresses-this replaces the symbols with graphics to avoid 
%%% e-mail address harvesting from the .pdf file
\begin{dajauthors}
\begin{authorinfo}[Sebastian]
  Sebastian Donoso\\ 
  Centro de Modelamiento Matem\'atico,\\
  CNRS-UMI 2807, Universidad de Chile\\
  Santiago, Chile.\\
  Current address: \\
  Instituto de Ciencias de la Ingenier\'ia \\
  Universidad de O'Higgins\\
  Rancagua, Chile.\\  
  sdonosof\imageat{}gmail\imagedot{}com \\
  \url{https://sites.google.com/site/sebastiandonosofuentes}
\end{authorinfo}
\begin{authorinfo}[Fabien]
  Fabien Durand\\
Laboratoire Ami\'enois de Math\'ematiques Fondamentales et Appliqu\'ees,\\ 
CNRS-UMR 7352, Universit\'{e} de Picardie Jules Verne\\
  Amiens, France\\
  fabien\imagedot{}durand\imageat{}u-picardie\imagedot{}com \\
  \url{http://www.lamfa.u-picardie.fr/fdurand}
\end{authorinfo}
\begin{authorinfo}[Alejandro]
  Alejandro Maass\\
Departamento de Ingenier\'{\i}a
Matem\'atica and Centro de Modelamiento Ma\-te\-m\'a\-ti\-co,\\
CNRS-UMI 2807, Universidad de Chile,  \\
Santiago, Chile.\\
amaass\imageat{}dim\imagedot{}uchile\imagedot{}cl\\
  \url{http://www.dim.uchile.cl/amaass}
\end{authorinfo}
\begin{authorinfo}[Samuel]
  Samuel Petite\\
   Laboratoire Ami\'enois de Math\'ematiques Fondamentales et Appliqu\'ees,\\ 
  CNRS-UMR 7352, Universit\'{e} de Picardie Jules Verne\\
  Amiens, France\\
  samuel\imagedot{}petite\imageat{}u-picardie\imagedot{}com \\
  \url{http://www.lamfa.u-picardie.fr/petite}
\end{authorinfo}
\end{dajauthors}

\end{document}